\documentclass[preprint,11pt]{elsarticle}
\usepackage[left=2.5cm, right=2.5cm, top=2cm, bottom=2cm]{geometry}

\usepackage{amsmath, amsthm, amssymb}
\usepackage{graphicx}
\usepackage{tikz}
\usepackage{url}
\usetikzlibrary{calc}
\usetikzlibrary{positioning}
\usepackage{booktabs}
\usepackage{enumerate}

\newtheorem{lemma}{Lemma}
\newtheorem{theorem}{Theorem}
\newtheorem{corollary}{Corollary}

\newtheorem{definition}{Definition}

\begin{document}

\begin{frontmatter}

\title{A general algebraic characterization of extremal $\Delta$-trees and $k$-cyclic graphs for degree-homogeneous VDB indices}

\author[nis]{Milan Ba\v{s}i\'{c}\corref{cor1}}
\ead{basic_milan@yahoo.com}
\ead{milan.basic@pmf.edu.rs}

\cortext[cor1]{Corresponding author}

\address[nis]{Faculty of Sciences and Mathematics, University of Ni\v{s}, Vi\v{s}egradska 33, 18000 Ni\v{s}, Serbia}

\begin{abstract}
We present a general algebraic method for maximizing a vertex-degree-based (VDB) topological index $TI(G) = \sum_{uv \in E(G)} f(d(u),d(v))$ over the class of $n$-vertex trees with a prescribed maximum degree $\Delta \ge 3$. By expressing the edge multiplicities of the basis $\mathcal{B} = \{(1,\Delta), (\Delta,\Delta)\}$ via degree-sum identities, we reduce the maximization of $TI(T)$ to the minimization of an explicit discrete penalty function $G(x,y)$ over the non-basic edges. We formulate five analytical conditions on the weight function $f(x,y)$ that dictate the optimal edge distributions, yielding a complete characterization of the extremal trees across all congruence classes of $n \pmod{\Delta - 1}$. Additionally, we provide their explicit construction by attaching leaves to a core of internal vertices. Applying this framework, we obtain the complete characterization of extremal $\Delta$-trees for the Degree-Ratio Sombor ($DRSO$), Symmetric Division Deg ($SDD$), and Hyperbolic Sombor ($HSO$) indices for all $\Delta \ge 3$. Furthermore, the analytical conditions identify the exact degree thresholds for the Arithmetic-Geometric ($AG$) and Diminished Sombor ($DSO$) indices, showing that the same extremal family maximizes $AG$ if and only if $3 \le \Delta \le 8$, and $DSO$ if and only if $3 \le \Delta \le 4$. Finally, since changing the edge count from $m = n - 1$ to $m = n - 1 + k$ affects only the constant term in the decomposition while leaving the penalty function $G(x,y)$ invariant, the method extends directly to connected $k$-cyclic graphs.
\end{abstract}

\begin{keyword}
vertex-degree-based topological indices \sep trees with given maximum degree \sep k-cyclic graphs \sep discrete optimization \sep penalty function method \sep Sombor-type indices

\MSC[2020] 05C09 \sep 05C35 \sep 90C05 \sep 92E10
\end{keyword}

\end{frontmatter}

\section{Introduction}

Let $G=(V,E)$ be a simple connected graph of order $n=|V|$ and size $m=|E|$. For a vertex $u \in V$, its degree is denoted by $d(u)$, and the maximum vertex degree in $G$ is denoted by $\Delta$. A vertex of degree $1$ is called a pendant vertex (or a leaf), and an edge incident to it is a pendant edge. An edge $uv \in E$ connecting vertices of degrees $d(u)=i$ and $d(v)=j$ is referred to as an $(i,j)$-type edge, and its multiplicity (the number of such edges in $G$) is denoted by $m_{i,j}$. Throughout this paper, $T$ denotes a tree of order $n$, while $P_n$ and $S_n$ denote the path and star graphs on $n$ vertices, respectively. When $\Delta \le 4$, a tree $T$ is referred to in chemical graph theory as a \emph{molecular} (or \emph{chemical}) tree. For standard graph-theoretic terminology not defined here, we refer the reader to \cite{bondy2008graph}.

A vertex-degree-based (VDB) topological index (or bond incident degree index) is a graph invariant of the general form
\begin{equation*}
    TI(G) = \sum_{uv \in E} f(d(u), d(v)) = \sum_{1 \le i \le j \le \Delta} f(i,j)\, m_{i,j},
\end{equation*}
where $f(x,y)$ is a symmetric real-valued weight function. Following the introduction of the Sombor index $SO(G) = \sum_{uv \in E} \sqrt{d(u)^2 + d(v)^2}$ by Gutman in 2021 \cite{gutman2021geometric}, geometric and degree-normalized VDB invariants have been widely investigated in mathematical chemistry (see the recent survey \cite{gutman2026survey}). Particular attention has been given to normalized and ratio-based variants introduced over the past two years, including the \emph{Diminished Sombor} ($DSO$) index \cite{movahedi2026diminished, movahedi2025diminished, movahedi2026maximal}, the \emph{Hyperbolic} \emph{Sombor} ($HSO$) index \cite{barman2026geometric}, and the \emph{Degree-Ratio Sombor} ($DRSO$) index \cite{nyauli2027geometric, ali2027degree, gutman2027comparing, gutman2026extremal, shaoa2027degree}. Alongside classical degree-ratio descriptors from the Adriatic family—such as the \emph{Symmetric Division Deg} ($SDD$) index \cite{vukicevic2010bond, ali2020on, ghorbani2021new} and the \emph{Arithmetic-Geometric} ($AG$) index \cite{shegehalli2015arithmetic, vujosevic2021arithmetic, rodriguez2021some, li2022sharp}—these five invariants share a key analytical property: their weight functions $f(x,y)$ are homogeneous of degree zero, and hence constant along the main diagonal $x=y$ (Table~\ref{tab:indices}).

\begin{table}[htbp]
\centering
\caption{The five degree-homogeneous VDB topological indices analyzed in this study, their weight functions $f(x,y)$ (for $1 \le x \le y \le \Delta$), diagonal values $f(x,x)$, and validity domains for Theorem~\ref{thm:alternative_structure}.}
\label{tab:indices}
\renewcommand{\arraystretch}{1.45}
\begin{tabular}{llllc}
\hline
\textbf{Topological Index} & \textbf{Symbol} & \textbf{Weight function $f(x,y)$} & \textbf{$f(x,x)$} & \textbf{Valid domain} \\
\hline
Degree-Ratio Sombor \cite{nyauli2027geometric} & $DRSO$ & $\displaystyle\sqrt{\left(\frac{x}{y}\right)^2 + \left(\frac{y}{x}\right)^2}$ & $\sqrt{2}$ & $\Delta \ge 3$ \\
Symmetric Division Deg \cite{vukicevic2010bond} & $SDD$ & $\displaystyle\frac{x^2+y^2}{xy} = \frac{x}{y} + \frac{y}{x}$ & $2$ & $\Delta \ge 3$ \\
Hyperbolic Sombor \cite{barman2026geometric} & $HSO$ & $\displaystyle\frac{\sqrt{x^2+y^2}}{x}$ \quad ($x \le y$) & $\sqrt{2}$ & $\Delta \ge 3$ \\
Arithmetic-Geometric \cite{shegehalli2015arithmetic} & $AG$ & $\displaystyle\frac{x+y}{2\sqrt{xy}}$ & $1$ & $3 \le \Delta \le 8$ \\
Diminished Sombor \cite{movahedi2026diminished} & $DSO$ & $\displaystyle\frac{\sqrt{x^2+y^2}}{x+y}$ & $\displaystyle\frac{\sqrt{2}}{2}$ & $3 \le \Delta \le 4$ \\
\hline
\end{tabular}
\end{table}

Determining the extremal trees that maximize a VDB index among all $n$-vertex trees with a prescribed maximum degree $\Delta$ is a fundamental problem in extremal graph theory. While over unconstrained $n$-vertex trees the maximum of such indices is typically attained by the star graph $S_n$ (as shown for $DRSO$ in \cite{ali2027degree, gutman2026extremal} and for $HSO$ in \cite{barman2026geometric}), imposing an upper bound $\Delta$ on the vertex degrees partitions the extremal problem into $\Delta - 1$ congruence classes of $n \pmod{\Delta - 1}$. Existing studies generally address this problem in one of two ways:
\begin{enumerate}
    \item \emph{Upper bounds tight for a single congruence class:} Several works derive general upper bounds in terms of $n$, $m$, and $\Delta$ (see, e.g., Proposition~2 in \cite{ali2027degree} for the $DRSO$ index, or \cite{rodriguez2021some} for the $AG$ index), or systematically identify optimal edge bases for various VDB indices via linear programming \cite{basic2026unified}. However, equality in such bounds holds if and only if the tree contains exclusively basic edges—such as $(1,\Delta)$- and $(\Delta,\Delta)$-type edges—which is realizable only in a single congruence class (e.g., $n \equiv 2 \pmod{\Delta - 1}$). The remaining $\Delta - 2$ congruence classes, where internal vertices of intermediate degrees must occur, remain unresolved.
    \item \emph{Index-by-index characterizations:} When complete characterizations across all congruence classes are pursued, they are invariably carried out for one index at a time, most often via combinatorial edge-switching arguments restricted to molecular trees ($\Delta \le 4$). For instance, extremal chemical trees ($\Delta \le 4$) have been characterized for the $SDD$ index \cite{ali2020on}, general chemical BID indices \cite{albalahi2023bond}, and recently for the $DSO$ \cite{movahedi2026maximal, mondal2026extremal} and $HSO$ \cite{hussain2027extremal} indices. Extending such proofs to larger $\Delta$ typically requires separate investigations for each degree—as seen with the forgotten topological index ($F$-index) \cite{furtula2015forgotten, abdo2017extremal} and the $\sigma$-irregularity index, where extremal trees were resolved first for $\Delta \le 4$ \cite{kovijanic2024sigma}, and subsequently for $\Delta = 5$ \cite{dimitrov2026sigma5}, before general characterizations for arbitrary $\Delta$ were completed combinatorially by Knor, Sedlar, and \v{S}krekovski \cite{knor2026sigma} and algebraically in \cite{basic2027sigma}.
\end{enumerate}

The aim of this paper is to unify these two lines of research. By extending the basis decomposition approach of \cite{basic2026unified} to all congruence classes of $n \pmod{\Delta - 1}$, we provide a complete characterization of extremal $\Delta$-trees for a broad class of VDB indices simultaneously. Rather than treating each invariant and each maximum degree $\Delta$ separately, we formulate the problem in the edge-multiplicity space relative to the basis $\mathcal{B} = \{(1,\Delta), (\Delta,\Delta)\}$. Using the edge-sum and degree-sum identities, we eliminate the basic edge multiplicities $m_{1,\Delta}$ and $m_{\Delta,\Delta}$ and reduce the maximization of any VDB index $TI(T)$ to the minimization of an explicit discrete penalty sum $\sum_{(i,j)\notin \mathcal{B}} G(i,j)\,m_{i,j}$. We then establish five analytical conditions, \textbf{(P1)}--\textbf{(P5)}, on the weight function $f(x,y)$ that govern the non-basic penalty hierarchy (Lemma~\ref{lem:universal_bounds}) and uniquely determine the extremal tree family $\mathcal{T}_{\mathrm{opt}}(n,\Delta)$ across all congruence classes of $n \pmod{\Delta - 1}$ (Theorem~\ref{thm:alternative_structure}).

This framework yields two main contributions:
\begin{itemize}
    \item \textbf{Complete characterizations for all $\Delta \ge 3$:} In Corollaries~\ref{cor:drso_all_delta}, \ref{cor:sdd_all_delta}, and \ref{cor:hso_all_delta}, we prove that the weight functions of the $DRSO$, $SDD$, and $HSO$ indices satisfy \textbf{(P1)}--\textbf{(P5)} for all $\Delta \ge 3$. This gives the first complete characterization of extremal $\Delta$-trees for the $DRSO$, $HSO$, and $SDD$ indices (including molecular trees $\Delta \le 4$).
    \item \textbf{Exact degree thresholds and structural transitions:} Our analytical criteria identify the exact threshold of $\Delta$ beyond which the basis $\mathcal{B} = \{(1,\Delta), (\Delta,\Delta)\}$ ceases to be optimal. Specifically, we prove that the $AG$ index satisfies \textbf{(P1)}--\textbf{(P5)} if and only if $3 \le \Delta \le 8$ (Corollary~\ref{cor:ag_small_delta}), whereas the $DSO$ index satisfies them if and only if $3 \le \Delta \le 4$ (Corollary~\ref{cor:dso_small_delta}), recovering the recent characterization of $DSO$-extremal molecular trees \cite{mondal2026extremal} and explaining why the extremal topology changes for $\Delta \ge 5$.
\end{itemize}

Finally, a key structural feature of our algebraic decomposition is that the penalty function $G(x,y)$ in \eqref{eq:algebraic_G} is independent of the total number of edges $m$. Specifically, replacing trees ($m = n - 1$) with general connected graphs of prescribed order $n$ and size $m = n - 1 + k$ (i.e., connected $k$-cyclic graphs, where $k \ge 0$ is the cyclomatic number) alters only the right-hand side of the edge-sum identity \eqref{eq:edge_sum}. This shifts the constant term $\mathcal{C}(n,\Delta)$ while leaving the penalty coefficients $G(x,y)$ unchanged. Consequently, without loss of algebraic generality, we carry out the derivation and constructive tree realizations for $k = 0$ in Section~2, and present the direct extension to connected $k$-cyclic graphs ($k \ge 1$) in Section~3.

\section{Algebraic Decomposition Relative to the Basis $\mathcal{B} = \{(1,\Delta), (\Delta,\Delta)\}$}

To analyze extremal trees with respect to the edge-multiplicity basis $\mathcal{B} = \{(1, \Delta), (\Delta, \Delta)\}$, we express the basic edge counts directly in terms of the non-basic variables.

For any tree $T$ of order $n$, the edge multiplicities $m_{i,j}$ satisfy two identities representing the total number of edges and the total number of vertices:
\begin{align}
    \sum_{1 \le i \le j \le \Delta} m_{i,j} &= n - 1, \label{eq:edge_sum} \\
    \sum_{1 \le i \le j \le \Delta} \left( \frac{1}{i} + \frac{1}{j} \right) m_{i,j} &= n. \label{eq:vertex_sum}
\end{align}

To isolate the basic variables $m_{1,\Delta}$ and $m_{\Delta,\Delta}$, we separate them from the non-basic summations. From \eqref{eq:edge_sum}, we have
\begin{equation}
\label{eq:m_delta_delta_sub}
    m_{\Delta,\Delta} = n - 1 - m_{1,\Delta} - \sum_{(i,j) \notin \mathcal{B}} m_{i,j}.
\end{equation}
Substituting this expression into \eqref{eq:vertex_sum} yields
\begin{equation*}
    \left( 1 + \frac{1}{\Delta} \right) m_{1,\Delta} + \frac{2}{\Delta} \left( n - 1 - m_{1,\Delta} - \sum_{(i,j) \notin \mathcal{B}} m_{i,j} \right) + \sum_{(i,j) \notin \mathcal{B}} \left( \frac{1}{i} + \frac{1}{j} \right) m_{i,j} = n.
\end{equation*}
Grouping the terms involving $m_{1,\Delta}$ on the left-hand side gives
\begin{equation*}
    \left( 1 - \frac{1}{\Delta} \right) m_{1,\Delta} = n - \frac{2(n-1)}{\Delta} - \sum_{(i,j) \notin \mathcal{B}} \left( \frac{1}{i} + \frac{1}{j} - \frac{2}{\Delta} \right) m_{i,j}.
\end{equation*}
Multiplying by $\frac{\Delta}{\Delta - 1}$ gives $m_{1,\Delta}$, and substituting back into \eqref{eq:m_delta_delta_sub} gives $m_{\Delta,\Delta}$:
\begin{align}
    m_{1,\Delta} &= \frac{(\Delta - 2)n + 2}{\Delta - 1} - \frac{\Delta}{\Delta - 1} \sum_{(i,j) \notin \mathcal{B}} \left( \frac{1}{i} + \frac{1}{j} - \frac{2}{\Delta} \right) m_{i,j}, \label{eq:m_1_delta_final} \\
    m_{\Delta,\Delta} &= \frac{n - \Delta - 1}{\Delta - 1} + \sum_{(i,j) \notin \mathcal{B}} \left[ \frac{\Delta}{\Delta - 1} \left( \frac{1}{i} + \frac{1}{j} - \frac{2}{\Delta} \right) - 1 \right] m_{i,j}. \label{eq:m_delta_delta_final}
\end{align}

Writing $TI(T) = \sum f(i,j) m_{i,j}$ in terms of basic and non-basic components, we have
\begin{equation}
\label{eq:ti_expanded}
    TI(T) = f(1,\Delta)m_{1,\Delta} + f(\Delta,\Delta)m_{\Delta,\Delta} + \sum_{(i,j) \notin \mathcal{B}} f(i,j) m_{i,j}.
\end{equation}
Substituting \eqref{eq:m_1_delta_final} and \eqref{eq:m_delta_delta_final} into \eqref{eq:ti_expanded} and collecting the coefficients of $m_{i,j}$ yields
\begin{equation}
    TI(T) = \mathcal{C}(n, \Delta) - \sum_{(i,j) \notin \mathcal{B}} \left[ f(\Delta,\Delta) - f(i,j) + \frac{\Delta}{\Delta - 1} \big( f(1,\Delta) - f(\Delta,\Delta) \big) \left( \frac{1}{i} + \frac{1}{j} - \frac{2}{\Delta} \right) \right] m_{i,j},
\end{equation}
where $\mathcal{C}(n, \Delta) = f(1,\Delta)\frac{(\Delta - 2)n + 2}{\Delta - 1} + f(\Delta,\Delta)\frac{n - \Delta - 1}{\Delta - 1}$ depends only on $n$ and $\Delta$.

Setting $C_\Delta = \frac{\Delta}{\Delta - 1} \big( f(1,\Delta) - f(\Delta,\Delta) \big)$, we obtain the penalty representation
\begin{equation}
\label{eq:algebraic_G}
    TI(T) = \mathcal{C}(n, \Delta) - \sum_{(i,j) \notin \mathcal{B}} G(i,j) m_{i,j}, \quad \text{where} \quad G(x,y) = f(\Delta,\Delta) - f(x,y) + C_\Delta \left( \frac{1}{x} + \frac{1}{y} - \frac{2}{\Delta} \right).
\end{equation}
Thus, maximizing $TI(T)$ reduces to minimizing the non-basic penalty sum $\sum_{(i,j)\notin \mathcal{B}} G(i,j) m_{i,j}$.

\subsection{Properties of the Weight Function}

Let $f(x,y)$ be a continuous symmetric weight function satisfying the following properties for a given maximum degree $\Delta$:
\begin{itemize}
    \item \textbf{(P1)} $f(x,x) = f(\Delta,\Delta)$ for all $x \le \Delta$.
    \item \textbf{(P2)} $f(x,\Delta) > f(\Delta,\Delta)$ for all $x < \Delta$.
    \item \textbf{(P3)} $\frac{\partial f(x,y)}{\partial y} > 0$ for all $x < y$.
    \item \textbf{(P4)} $\frac{C_\Delta}{t^2} > \left| \frac{\partial f(t,x)}{\partial t} \right|$ for all $1 \le t < x < \Delta$, where $C_\Delta = \frac{\Delta}{\Delta - 1} \big( f(1,\Delta) - f(\Delta,\Delta) \big)$.
    \item \textbf{(P5)} $f(\Delta,\Delta) - f(x,\Delta) + C_\Delta \left( \frac{1}{x} - \frac{1}{\Delta} \right) > 0$ for all $2 \le x \le \Delta - 1$.
\end{itemize}

These five analytical conditions directly govern the behavior and ordering of the penalty function $G(x,y)$ over all non-basic edge types, as established in the following lemma.

\begin{lemma}[Penalty Bounds]
\label{lem:universal_bounds}
If $f(x,y)$ satisfies properties (P1)–(P5), then $G(x,y)$ satisfies:
\begin{enumerate}[(i)]
    \item $C_\Delta > 0$.
    \item $G(x,j) > G(x,\Delta)$ for all $x < j < \Delta$.
    \item $G(x,x) > 2G(x,\Delta)$ for all $x < \Delta$.
    \item $G(i,x) > G(x,\Delta)$ for all $1 \le i < x < \Delta$.
    \item $G(x,\Delta) > 0$ for all $2 \le x \le \Delta - 1$.
\end{enumerate}
\end{lemma}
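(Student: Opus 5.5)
Each of the five items follows by writing out the relevant differences of $G$ from its definition in \eqref{eq:algebraic_G} and applying one of (P1)--(P5). The pattern is that (P3) or (P4) controls a difference in $f$ via the mean value theorem, while the remaining terms come from the explicit rational part $C_\Delta(\frac1x+\frac1y-\frac2\Delta)$.

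For (i), take $x=1$ in (P2): $f(1,\Delta)>f(\Delta,\Delta)$, and $\frac{\Delta}{\Delta-1}>0$, so $C_\Delta>0$. Item (v) is (P5) verbatim, since $G(x,\Delta)=f(\Delta,\Delta)-f(x,\Delta)+C_\Delta(\frac1x-\frac1\Delta)$.

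For (ii), compute
\[
G(x,j)-G(x,\Delta)=f(x,\Delta)-f(x,j)+C_\Delta\Bigl(\frac1j-\frac1\Delta\Bigr).
\]
Since $x<j<\Delta$, (P3) applied along the segment $y\in[j,\Delta]$, where $x<y$, gives $f(x,\Delta)>f(x,j)$. The second term is positive by (i).

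For (iii), (P1) gives $G(x,x)=C_\Delta(\frac2x-\frac2\Delta)$. Hence
\[
G(x,x)-2G(x,\Delta)=2\bigl(f(x,\Delta)-f(\Delta,\Delta)\bigr),
\]
which is positive by (P2).

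For (iv), with $1\le i<x<\Delta$,
\[
G(i,x)-G(x,\Delta)=f(x,\Delta)-f(i,x)+C_\Delta\Bigl(\frac1i-\frac1\Delta\Bigr).
\]
I split $f(x,\Delta)-f(i,x)=\bigl(f(x,\Delta)-f(x,x)\bigr)+\bigl(f(x,x)-f(i,x)\bigr)$. The first bracket is positive by (P1) and (P2). For the second, use symmetry and the mean value theorem in the first variable, integrating $\partial_t f(t,x)$ over $t\in[i,x]$. By (P4), $|\partial_t f(t,x)|<C_\Delta/t^2$ on $[i,x)$, so
\[
|f(x,x)-f(i,x)|<C_\Delta\int_i^x t^{-2}\,dt=C_\Delta\Bigl(\frac1i-\frac1x\Bigr)\le C_\Delta\Bigl(\frac1i-\frac1\Delta\Bigr).
\]
Adding the pieces gives positivity.

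The only delicate step is this integration in (iv). Strictness needs (P4) on a set of positive measure, which holds because $i<x$, together with continuity, or differentiability, of $f$ up to the endpoint $t=x$. If only pointwise derivative bounds are available, I would instead apply the mean value theorem to $\phi(t)=f(t,x)-C_\Delta/t$, whose derivative is negative on $(i,x)$, to get $\phi(x)<\phi(i)$. That yields $f(x,x)-f(i,x)<C_\Delta(\frac1x-\frac1i)$ only in one direction, but that direction is the one needed.
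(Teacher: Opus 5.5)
Your proof is correct and follows the paper's route. Items (i), (iii) and (v) match the paper, and (ii) is the paper's monotonicity-in-$y$ argument written as a difference. In (iv) you use the same key fact, that $C_\Delta/t-f(t,x)$ decreases on $[i,x]$ by (P4); you merely split at $t=x$ and handle $[x,\Delta]$ with (P1)--(P2) rather than the paper's appeal to (P3) for $t>x$. That split has the small advantage of never differentiating across the diagonal.

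Your fallback remark has a sign slip. For $\phi(t)=f(t,x)-C_\Delta/t$, (P4) gives $\phi'(t)=\partial_t f(t,x)+C_\Delta/t^2>0$, so $\phi(x)>\phi(i)$ and hence $f(x,x)-f(i,x)>C_\Delta\left(\frac{1}{x}-\frac{1}{i}\right)$. The inequality you wrote points the other way and is not the one needed. The correct version is exactly the paper's $\Phi=-\phi$ with $\Phi'<0$.
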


\begin{proof}
(i) By (P2) at $x=1$, $f(1,\Delta) > f(\Delta,\Delta)$. Thus, $C_\Delta > 0$.

(ii) The partial derivative of $G(x,y)$ is:
\begin{equation*}
    \frac{\partial G(x,y)}{\partial y} = -\frac{\partial f(x,y)}{\partial y} - \frac{C_\Delta}{y^2}.
\end{equation*}
By (P3) and $C_\Delta > 0$, $\frac{\partial G}{\partial y} < 0$. Therefore, $G(x,j) > G(x,\Delta)$ for $j < \Delta$.

(iii) Evaluating $G(x,x)$ and $2G(x,\Delta)$ gives:
\begin{align*}
    G(x,x) &= f(\Delta,\Delta) - f(x,x) + 2C_\Delta \left( \frac{1}{x} - \frac{1}{\Delta} \right), \\
    2G(x,\Delta) &= 2f(\Delta,\Delta) - 2f(x,\Delta) + 2C_\Delta \left( \frac{1}{x} - \frac{1}{\Delta} \right).
\end{align*}
Subtracting yields:
\begin{equation*}
    G(x,x) - 2G(x,\Delta) = -f(x,x) + 2f(x,\Delta) - f(\Delta,\Delta).
\end{equation*}
By (P1), $f(x,x) = f(\Delta,\Delta)$, simplifying the expression to $2\big(f(x,\Delta) - f(\Delta,\Delta)\big)$. By (P2), this is positive, establishing $G(x,x) > 2G(x,\Delta)$.

(iv) Expanding the inequality $G(i,x) > G(x,\Delta)$ using the definition of $G$ yields:
\begin{equation*}
    f(\Delta,\Delta) - f(i,x) + C_\Delta \left( \frac{1}{i} + \frac{1}{x} - \frac{2}{\Delta} \right) > f(\Delta,\Delta) - f(x,\Delta) + C_\Delta \left( \frac{1}{x} - \frac{1}{\Delta} \right).
\end{equation*}
Canceling common terms and applying symmetry $f(x,\Delta) = f(\Delta,x)$, this simplifies to:
\begin{equation*}
    \frac{C_\Delta}{i} - f(i,x) > \frac{C_\Delta}{\Delta} - f(\Delta,x).
\end{equation*}
Defining $\Phi(t) = \frac{C_\Delta}{t} - f(t,x)$, the inequality is exactly equivalent to $\Phi(i) > \Phi(\Delta)$. The derivative is:
\begin{equation*}
    \Phi'(t) = -\frac{C_\Delta}{t^2} - \frac{\partial f(t,x)}{\partial t}.
\end{equation*}
For $t < x$, (P4) ensures $\Phi'(t) < 0$. For $t > x$, (P3) implies $\frac{\partial f(t,x)}{\partial t} > 0$, making both terms negative, so $\Phi'(t) < 0$ trivially. Since $\Phi(t)$ is strictly decreasing on $[1, \Delta]$ and $i < \Delta$, the inequality $\Phi(i) > \Phi(\Delta)$ holds.

(v) $G(x,\Delta) > 0$ follows directly from (P5) and the definition of $G(x,\Delta)$ in \eqref{eq:algebraic_G}.
\end{proof}

With the penalty bounds secured by Lemma~\ref{lem:universal_bounds}, we now establish our main structural characterization of extremal trees relative to the basis $\mathcal{B} = \{(1,\Delta), (\Delta,\Delta)\}$.

\begin{theorem}[Extremal Tree Degree Structure]
\label{thm:alternative_structure}
Let $T$ be a tree of order $n$ and maximum degree $\Delta \ge 3$. If $f(x,y)$ satisfies properties (P1)–(P5), then any tree $T$ maximizing $TI(T)$ has the following edge-multiplicity structure:
\begin{itemize}
    \item If $n \equiv 2 \pmod{\Delta-1}$, $T$ contains only edges of types $(1,\Delta)$ and $(\Delta,\Delta)$.
    \item Otherwise, $T$ contains a single internal vertex of degree $x$, yielding $m_{x,\Delta} = x$ alongside the basic edges $(1,\Delta)$ and $(\Delta,\Delta)$, where $x \in \{2, \dots, \Delta - 1\}$ is the unique integer satisfying $n \equiv x+1 \pmod{\Delta-1}$.
\end{itemize}
\end{theorem}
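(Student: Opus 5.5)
The plan is to work entirely with the penalty representation \eqref{eq:algebraic_G}. Since $\mathcal{C}(n,\Delta)$ is fixed, a tree maximizes $TI$ exactly when it minimizes $P(T)=\sum_{(i,j)\notin\mathcal{B}} G(i,j)m_{i,j}$. I will first prove a lower bound for $P(T)$ in terms of the \emph{intermediate} vertices, i.e.\ vertices of degree $d\in\{2,\dots,\Delta-1\}$. I will then show that the congruence class of $n$ forces a certain set of intermediate degrees. Finally I will show that a single vertex of degree $x$, adjacent only to vertices of degree $\Delta$, is the unique cheapest way to meet that requirement.

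\textbf{Step 1 (congruence constraint).} Let $n_d$ be the number of vertices of degree $d$. From $n=\sum_d n_d$ and $2(n-1)=\sum_d d\,n_d$ we get $n=2+\sum_{d\ge 2}(d-1)n_d$. Reducing modulo $\Delta-1$ gives
\[
n\equiv 2+\sum_{d=2}^{\Delta-1}(d-1)n_d \pmod{\Delta-1}.
\]
Hence if $n\equiv 2$ there is no constraint. Otherwise at least one intermediate vertex must exist, and the multiset $D$ of intermediate degrees must satisfy $\sum_{d\in D}(d-1)\equiv x-1 \pmod{\Delta-1}$.

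\textbf{Step 2 (charging penalties to intermediate vertices).} Every non-basic edge is either a $(1,j)$ edge with $j<\Delta$, or has at least one intermediate endpoint. I charge each non-basic edge to its intermediate endpoint(s) and aim to show
\[
P(T)\ge \sum_{v \text{ intermediate}} d(v)\,G(d(v),\Delta),
\]
with equality only if every intermediate vertex is adjacent solely to vertices of degree $\Delta$. The edge types are handled as follows.
\begin{itemize}
\item A $(d,\Delta)$ edge contributes exactly $G(d,\Delta)$.
\item A $(1,d)$ edge is handled by Lemma~\ref{lem:universal_bounds}(iv) with $i=1$, which gives $G(1,d)>G(d,\Delta)$.
\item A $(d,d)$ edge is handled by Lemma~\ref{lem:universal_bounds}(iii).
\item For a $(i,d)$ edge with $i<d<\Delta$, both endpoints are charged, so I need the stronger bound $G(i,d)>G(i,\Delta)+G(d,\Delta)$. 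This follows exactly as in (iii): the $C_\Delta$ terms cancel, leaving $\big(f(i,\Delta)-f(i,d)\big)+\big(f(d,\Delta)-f(\Delta,\Delta)\big)$. The first bracket is positive by (P3) and the second by (P2).
\item Edges between a leaf and a degree-$\Delta$ vertex, or between two degree-$\Delta$ vertices, are basic.
\end{itemize}
By Lemma~\ref{lem:universal_bounds}(v) every charge is positive, so all estimates point the same way.

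\textbf{Step 3 (optimizing over degree multisets) — main obstacle.} Set $h(d)=d\,G(d,\Delta)>0$. It remains to show that $\sum_{d\in D}h(d)\ge h(x)$ for every admissible multiset $D$, with equality only when $D=\{x\}$. My approach is a merging argument on $D$:
\begin{itemize}
\item Replacing $a,b\in D$ by the single degree $a+b-1$ preserves the residue. If $a+b-1\ge\Delta$, use $a+b-\Delta$ instead, or delete both when that value is $1$.
\item It suffices to show that each such move strictly decreases $\sum h$. Positivity of $h$ handles the deletion case.
\end{itemize}
Writing $h(d)=d\big(f(\Delta,\Delta)-f(d,\Delta)\big)+C_\Delta\big(1-d/\Delta\big)$, the $C_\Delta$-part of $h(a)+h(b)-h(a+b-1)$ equals $C_\Delta(1-1/\Delta)=f(1,\Delta)-f(\Delta,\Delta)>0$. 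The remaining $f$-part must be shown not to cancel this gain, and the overflow case needs a similar computation. I would try to control both using (P4) and (P5) with a mean-value estimate on $t\mapsto f(t,\Delta)$. This is the step I expect to be delicate, and (P5) may have to be used exactly at the boundary.

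\textbf{Step 4 (realizability).} The bound in Step 3 is attained by attaching leaves to a core of degree-$\Delta$ vertices together with one vertex of degree $x$ whose neighbours all have degree $\Delta$. This realizes equality in every step, so the extremal trees are exactly those described in the statement.
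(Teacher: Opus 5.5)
\textbf{The gap is Step 3, and it cannot be closed from (P1)--(P5).}

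Your Steps 1 and 2 are sound. Step 2 is in fact more careful than the paper. The paper bounds each $\mathcal{P}_x$ separately and then ``sums over all internal degrees'', which counts every $(i,x)$ edge with $2\le i<x\le\Delta-1$ twice. Your inequality $G(i,d)>G(i,\Delta)+G(d,\Delta)$ (the $C_\Delta$ terms cancel, then (P3) and (P2) apply) is exactly what makes $\mathcal{P}\ge\sum_d d\,n_d\,G(d,\Delta)$ legitimate.

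To see why Step 3 fails, put $\psi(t)=t\,f(t,\Delta)$. Then $h(d)=d\,G(d,\Delta)=L(d)-\psi(d)$, where $L$ is the chord of $\psi$ through the points at $t=1$ and $t=\Delta$; in particular $h(1)=h(\Delta)=0$. Your two merge gains are exactly
\[
\psi(1)+\psi(a+b-1)-\psi(a)-\psi(b)\qquad\text{and}\qquad \psi(\Delta)+\psi(a+b-\Delta)-\psi(a)-\psi(b).
\]
These are second-difference quantities of $t\mapsto t f(t,\Delta)$, and (P1)--(P5) do not control them:
\begin{itemize}
\item (P3) concerns derivatives in the larger argument, not in $t$ for $f(t,\Delta)$.
\item (P4) concerns $f(t,x)$ with $x<\Delta$.
\item (P5) only says that $\psi$ lies strictly below its chord.
\end{itemize}
For instance $4G(2,\Delta)-3G(3,\Delta)=f(1,\Delta)-4f(2,\Delta)+3f(3,\Delta)$, and nothing in the hypotheses prevents this from being negative.

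\textbf{It is in fact negative in cases the paper covers, so the statement is false as posed.} For $DSO$ with $\Delta=4$, where Corollary~\ref{cor:dso_small_delta} asserts (P1)--(P5), the quantity is $\approx-0.0139$. Take $n=13\equiv 1\pmod 3$.
\begin{itemize}
\item The predicted tree, a degree-$3$ vertex joined to three degree-$4$ vertices carrying $9$ leaves, has $DSO\approx 9.5644$.
\item The path $v_1w_1v_2w_2v_3$ with $d(w_1)=d(w_2)=2$ and $3,2,3$ leaves at $v_1,v_2,v_3$ has $DSO\approx 9.5784$.
\end{itemize}
The same failure occurs for $AG$ with $\Delta\in\{6,7,8\}$; for example $\Delta=6$, $n=19$ gives $\approx 24.6150$ versus $\approx 24.6230$. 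So no mean-value estimate can rescue your merge.

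Reading (P4) with real $x$ does not help. Letting $x\to\Delta$ would make $G(\cdot,\Delta)$ nonincreasing starting from $G(1,\Delta)=0$, which contradicts (P5).

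The paper does not supply the missing step either. Its Case 2 minimizes the unweighted sum $\sum x n_x$ and then transfers this to the weighted bound $\sum x n_x G(x,\Delta)$. That is exactly the inference these examples break.

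\textbf{What would make your argument work.} You need an extra hypothesis such as strict discrete convexity of $\psi$:
\[
\psi(k-1)-2\psi(k)+\psi(k+1)>0\quad\text{for } 2\le k\le\Delta-1.
\]
Under this hypothesis both merge gains are positive by comparing increments, (P5) follows automatically, and your argument goes through. It holds for $SDD$ ($\psi=t^2/\Delta+\Delta$), $HSO$ ($\psi=\sqrt{t^2+\Delta^2}$) and $DRSO$ ($\psi=\sqrt{t^4+\Delta^4}/\Delta$). It fails for $AG$ when $\Delta\ge 6$ and for $DSO$ when $\Delta=4$.
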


\begin{proof}
Let $n_k$ be the number of vertices of degree $k$. The vertex and degree sums are:
\begin{align}
    \sum_{k=1}^{\Delta} n_k &= n, \label{eq:alt_sum_v} \\
    \sum_{k=1}^{\Delta} k n_k &= 2n - 2. \label{eq:alt_sum_d}
\end{align}
Subtracting \eqref{eq:alt_sum_v} from \eqref{eq:alt_sum_d} isolates $n_\Delta$:
\begin{equation}
    (\Delta-1)n_\Delta = n - 2 - \sum_{x=2}^{\Delta-1} (x-1)n_x.
\end{equation}
Taking this equation modulo $\Delta - 1$ yields the constraint for internal vertices:
\begin{equation}
    \label{eq:alt_modulo}
    \sum_{x=2}^{\Delta-1} (x-1)n_x \equiv n - 2 \pmod{\Delta - 1}.
\end{equation}

To maximize the index, we minimize the total penalty $\mathcal{P} = \sum_{(i,j) \notin \mathcal{B}} G(i,j) m_{i,j}$. For any internal degree $x \in \{2, \dots, \Delta-1\}$, vertex incidence requires:
\begin{equation}
    \sum_{i < x} m_{i,x} + 2m_{x,x} + \sum_{j > x} m_{x,j} = x n_x.
\end{equation}
Multiplying this incidence equation by the boundary gap $G(x, \Delta)$ gives:
\begin{equation}
    x n_x G(x, \Delta) = \sum_{i < x} G(x, \Delta) m_{i,x} + 2G(x, \Delta) m_{x,x} + \sum_{j > x} G(x, \Delta) m_{x,j}.
\end{equation}
Let $\mathcal{P}_x$ be the total penalty contribution of all non-basic edges incident to $x$. By Lemma \ref{lem:universal_bounds}(ii), (iii), and (iv), the strict inequalities $G(i,x) > G(x,\Delta)$, $G(x,x) > 2G(x,\Delta)$, and $G(x,j) > G(x,\Delta)$ guarantee that:
\begin{align}
    \mathcal{P}_x &= \sum_{i < x} G(i,x) m_{i,x} + G(x,x) m_{x,x} + \sum_{j > x} G(x,j) m_{x,j} \nonumber \\
    &\ge \sum_{i < x} G(x,\Delta) m_{i,x} + 2G(x,\Delta) m_{x,x} + \sum_{j > x} G(x,\Delta) m_{x,j} \nonumber \\
    &= x n_x G(x, \Delta). \label{eq:penalty_bound}
\end{align}
Because the bounding inequalities in the Lemma are strict, equality in \eqref{eq:penalty_bound} is attained if and only if $m_{x,\Delta} = x n_x$ and all other non-basic incident edges are exactly zero. Summing over all internal degrees establishes the global lower bound $\mathcal{P} \ge \sum x n_x G(x,\Delta)$.

\textbf{Case 1:} If $n \equiv 2 \pmod{\Delta - 1}$, the minimal non-negative solution to the congruence \eqref{eq:alt_modulo} is $n_x = 0$ for all $2 \le x \le \Delta - 1$. Since $G(x,\Delta) > 0$ by Lemma \ref{lem:universal_bounds}(v), setting $n_x = 0$ achieves the absolute minimum $\mathcal{P} = 0$. By the strict equality condition, this restricts the tree entirely to the basis edges $m_{1,\Delta}$ and $m_{\Delta,\Delta}$. 

\textbf{Case 2:} If $n \not\equiv 2 \pmod{\Delta - 1}$, let $R \in \{1, \dots, \Delta-2\}$ be the unique remainder such that $R \equiv n - 2 \pmod{\Delta - 1}$. By \eqref{eq:alt_modulo}, the sum $S = \sum_{x=2}^{\Delta-1} (x-1)n_x$ satisfies $S = k(\Delta - 1) + R$ for some integer $k \ge 0$. Thus, $S \ge R > 0$. 

Since $S > 0$, at least one internal vertex exists, establishing $\sum_{x=2}^{\Delta-1} n_x \ge 1$. 

Because $G(x,\Delta) > 0$, minimizing the penalty lower bound requires minimizing the coefficient sum $\sum x n_x$. Algebraic decomposition of this sum yields:
\begin{equation}
    \sum_{x=2}^{\Delta-1} x n_x = \sum_{x=2}^{\Delta-1} (x-1)n_x + \sum_{x=2}^{\Delta-1} n_x = S + \sum_{x=2}^{\Delta-1} n_x \ge R + 1.
\end{equation}
Equality holds if and only if $S = R$ and $\sum n_x = 1$. This exact condition requires $n_x = 1$ for a single internal degree $x$ (with all other $n_k = 0$), directly implying $x - 1 = R$. 

Substituting $R \equiv n - 2 \pmod{\Delta - 1}$ yields $n \equiv x + 1 \pmod{\Delta - 1}$. The minimal global penalty is exactly $x G(x, \Delta)$, which, by the equality condition of \eqref{eq:penalty_bound}, is achieved if and only if $m_{x,\Delta} = x$.
\end{proof}

\subsection{Applications to Degree-Homogeneous Topological Indices}

We now apply Theorem~\ref{thm:alternative_structure} to the five degree-homogeneous VDB invariants listed in Table~\ref{tab:indices}. We first examine the three indices—$DRSO$, $SDD$, and $HSO$—whose weight functions satisfy conditions \textbf{(P1)}--\textbf{(P5)} universally for all $\Delta \ge 3$.

\begin{corollary}[Extremal Trees for the $DRSO$ Index]
\label{cor:drso_all_delta}
Let $T$ be a tree of order $n$ and maximum degree $\Delta \ge 3$. The trees maximizing the Degree-Ratio Sombor ($DRSO$) index are uniquely characterized by Theorem \ref{thm:alternative_structure} for all $\Delta \ge 3$.
\end{corollary}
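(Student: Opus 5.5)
The plan is to verify that $f(x,y)=\sqrt{(x/y)^2+(y/x)^2}$ satisfies \textbf{(P1)}--\textbf{(P5)} for every $\Delta\ge 3$, and then to invoke Theorem~\ref{thm:alternative_structure} directly.

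Since $f$ is homogeneous of degree zero and symmetric, everything reduces to the one-variable function $g(r)=\sqrt{r^2+r^{-2}}$ on $r\ge 1$, with $f(x,y)=g(y/x)$ for $x\le y$. I will first record two facts about $g$:
\begin{equation*}
g\,g'=r-r^{-3},\qquad (g')^2+g\,g''=1+3r^{-4}.
\end{equation*}
The first identity gives $g'(r)>0$ for $r>1$. Squaring $r-r^{-3}<\sqrt{r^2+r^{-2}}$ reduces it to $r^{-6}<3r^{-2}$, so $0\le g'(r)<1$ for $r\ge 1$. Combined with the second identity, this gives $g''=(1+3r^{-4}-(g')^2)/g>0$, so $g$ is strictly convex on $[1,\infty)$.

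With these facts, (P1)--(P3) are immediate:
\begin{itemize}
\item (P1): $f(x,x)=g(1)=\sqrt2$.
\item (P2): $f(x,\Delta)=g(\Delta/x)>g(1)$ for $x<\Delta$, because $g$ is increasing.
\item (P3): $\partial f/\partial y=g'(y/x)/x>0$ for $x<y$.
\end{itemize}

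For (P4), take $t<x<\Delta$ and set $r=x/t>1$. Then $|\partial f(t,x)/\partial t|=g'(r)\,x/t^2$, so (P4) is equivalent to $C_\Delta>x\,g'(r)$. Since $g'<1$ and $x\le\Delta-1$, it suffices to show $C_\Delta\ge\Delta-1$. Using $g(\Delta)>\Delta$, we get
\begin{equation*}
C_\Delta=\frac{\Delta}{\Delta-1}\bigl(g(\Delta)-\sqrt2\bigr)>\frac{\Delta(\Delta-\sqrt2)}{\Delta-1}.
\end{equation*}
Moreover, $\Delta^2-\sqrt2\Delta-(\Delta-1)^2=(2-\sqrt2)\Delta-1>0$ for $\Delta\ge 2$. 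This settles (P4).

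The step I expect to need the most care is (P5). My approach is to reparametrize it as a chord inequality. Let $s=1/x\in[1/\Delta,1]$ and $h(s)=g(\Delta s)$, so that $h(1/\Delta)=\sqrt2=f(\Delta,\Delta)$ and $h(1)=g(\Delta)=f(1,\Delta)$. Then $C_\Delta=\bigl(h(1)-h(1/\Delta)\bigr)/(1-1/\Delta)$ is exactly the slope of the chord of $h$ between $s=1/\Delta$ and $s=1$. Condition (P5) then reads
\begin{equation*}
h(s)-h(1/\Delta)<C_\Delta\,(s-1/\Delta)\qquad\text{for } s=1/x,\ 2\le x\le\Delta-1.
\end{equation*}
In words, $h$ lies strictly below this chord at interior points. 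Since $h$ is an affine reparametrization of $g$ on $[1,\Delta]$, it inherits strict convexity, and strict convexity gives exactly this strict inequality at every interior point.

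All five properties therefore hold for every $\Delta\ge3$, and Theorem~\ref{thm:alternative_structure} yields the stated characterization of the $DRSO$-maximal trees.
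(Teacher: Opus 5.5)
Your proof is correct. For (P1)--(P4) it follows the paper's route: the paper also derives $C_\Delta>\Delta-1$ from $\sqrt{\Delta^4+1}>\Delta^2$. It then bounds $\frac{x^4-t^4}{x\sqrt{t^4+x^4}}$, which equals your $x\,g'(x/t)$, by $x\le\Delta-1$, evaluating at $t=1$ where you use $g'<1$.

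The genuine difference is in (P5). The paper isolates $C_\Delta$ and sets $F(x)=\frac{\sqrt{x^4+\Delta^4}-x\Delta\sqrt2}{\Delta-x}$, noting $F(1)=C_\Delta$. It then proves $F$ strictly decreasing on $[1,\Delta)$: it differentiates, substitutes $x=u\Delta$, squares away the radical, and bounds $1-4u^3(1-u)-u^6(2-u)^2$ below by $(u-1)^2(3u^2+2u+1)$. In your notation, $F(x)=\frac{h(1/x)-h(1/\Delta)}{1/x-1/\Delta}$ is exactly the secant slope of $h$ from $s=1/\Delta$. So the paper verifies by hand the secant-slope monotonicity that strict convexity of $h$ gives at once.

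Your route gets that convexity from $gg'=r-r^{-3}$ and $(g')^2+gg''=1+3r^{-4}$, plus the same bound $g'<1$ you used for (P4). This is shorter and avoids the radical-squaring computation. It also yields a reusable criterion: for any degree-zero homogeneous $f(x,y)=g(y/x)$ with $x\le y$, strict convexity of $g$ on $[1,\Delta]$ implies (P5). That settles (P5) immediately for $SDD$ with $g(r)=r+1/r$ and for $HSO$ with $g(r)=\sqrt{1+r^2}$.

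What the paper's direct treatment of $F$ keeps is closeness to the exact condition $F(1)>F(x)$, since convexity is only sufficient. For $AG$, $g(r)=\frac12(\sqrt r+1/\sqrt r)$ has $g''(r)=\frac18 r^{-5/2}(3-r)$, so $g$ is convex only for $r<3$, yet (P5) holds up to $\Delta=8$. Threshold phenomena like that one therefore require working with the secant slope itself.
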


\begin{proof}
For the $DRSO$ index, the weight function is $f(x,y) = \sqrt{(x/y)^2 + (y/x)^2}$. 
(P1): $f(x,x) = \sqrt{2} = f(\Delta,\Delta)$. 
(P2): Applying the AM-GM inequality to the numerator yields $x^4+\Delta^4 > 2x^2\Delta^2$ for $x < \Delta$. Thus, $f(x,\Delta) = \frac{\sqrt{x^4+\Delta^4}}{x\Delta} > \sqrt{2}$. 
(P3): For $x < y$, $\frac{\partial f}{\partial y} = \frac{y^4-x^4}{x y^3 \sqrt{x^4+y^4}} > 0$.

The constant $C_\Delta$ evaluates to $C_\Delta = \frac{\sqrt{\Delta^4+1} - \Delta\sqrt{2}}{\Delta-1}$. Since $\sqrt{\Delta^4+1} > \Delta^2$, it satisfies:
\begin{equation*}
    C_\Delta > \frac{\Delta^2 - \Delta\sqrt{2}}{\Delta-1}.
\end{equation*}
The inequality $\frac{\Delta^2 - \Delta\sqrt{2}}{\Delta-1} \ge \Delta - 1$ simplifies to $\Delta(2-\sqrt{2}) \ge 1$, which holds for all $\Delta \ge 3$. Thus, $C_\Delta > \Delta - 1$.

For (P4), evaluating the partial derivative $\left| \frac{\partial f(t,x)}{\partial t} \right| = \frac{x^4-t^4}{t^2 x \sqrt{t^4+x^4}}$ reduces the constraint $\frac{C_\Delta}{t^2} > \left| \frac{\partial f}{\partial t} \right|$ to:
\begin{equation*}
    C_\Delta > \frac{x^4-t^4}{x \sqrt{t^4+x^4}}, \quad 1 \le t < x < \Delta.
\end{equation*}
The right-hand side decreases with respect to $t$. Since $t \ge 1$, its maximum on this interval is attained at $t = 1$:
\begin{equation*}
    \max_{1 \le t < x} \frac{x^4-t^4}{x \sqrt{t^4+x^4}} = \frac{x^4-1}{x \sqrt{x^4+1}} < \frac{x^4}{x \sqrt{x^4}} = x \le \Delta - 1.
\end{equation*}
Combining these inequalities yields $C_\Delta > \Delta - 1 \ge x > \frac{x^4-1}{x\sqrt{x^4+1}}$, proving (P4) universally.

For (P5), isolating $C_\Delta$ establishes the condition:
\begin{equation*}
    C_\Delta > \frac{\sqrt{x^4+\Delta^4} - x\Delta\sqrt{2}}{\Delta - x}, \quad 2 \le x \le \Delta - 1.
\end{equation*}
Let $F(x)$ denote the right-hand side. Evaluating $F(x)$ at $x=1$ yields exactly $C_\Delta$: $F(1) = \frac{\sqrt{\Delta^4+1} - \Delta\sqrt{2}}{\Delta-1} = C_\Delta$. Therefore, (P5) is equivalent to $F(1) > F(x)$ for all $x \ge 2$.

To prove $F(x)$ is decreasing on $[1, \Delta)$, we analyze its derivative. $F'(x)$ carries the same sign as its reduced numerator $N(x) = \Delta^4 + 2\Delta x^3 - x^4 - \Delta^2\sqrt{2(x^4+\Delta^4)}$. Substituting $x = u\Delta$ for $u \in (0,1)$ and isolating the radical requires proving:
\begin{equation*}
    1 + 2u^3 - u^4 < \sqrt{2(1+u^4)}.
\end{equation*}
Squaring both sides and grouping terms reduces this to:
\begin{equation*}
    1 - 4u^3(1-u) - u^6(2-u)^2 > 0.
\end{equation*}
We bound the third term by completing the square: $u(2-u) = 1 - (1-u)^2 < 1$ for all $u \in (0,1)$. Thus, $u^2(2-u)^2 < 1$, bounding $u^6(2-u)^2 < u^4$. Substituting this upper bound reduces the inequality to:
\begin{equation*}
    1 - 4u^3(1-u) - u^4 = 3u^4 - 4u^3 + 1 > 0.
\end{equation*}
Factoring the polynomial yields $(u-1)^2(3u^2 + 2u + 1)$. Because $u \in (0,1)$, $(u-1)^2 > 0$ and $3u^2 + 2u + 1 > 0$, proving $N(x) < 0$. Consequently, $F'(x) < 0$, guaranteeing $F(x)$ is strictly decreasing on $[1, \Delta)$. 

Since $1 < 2 \le x$, it follows that $C_\Delta = F(1) > F(x)$, satisfying (P5). Theorem \ref{thm:alternative_structure} therefore governs the extremal topology for all $\Delta \ge 3$.
\end{proof}

For the Symmetric Division Deg ($SDD$) index, the rational form of the weight function simplifies the verification of conditions \textbf{(P1)}--\textbf{(P5)} considerably.

\begin{corollary}[Extremal Trees for the $SDD$ Index]
\label{cor:sdd_all_delta}
Let $T$ be a tree of order $n$ and maximum degree $\Delta \ge 3$. The trees maximizing the Symmetric Division Deg ($SDD$) index are characterized by Theorem \ref{thm:alternative_structure} for all $\Delta \ge 3$.
\end{corollary}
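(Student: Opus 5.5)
We verify (P1)--(P5) for $f(x,y)=\frac{x}{y}+\frac{y}{x}$ and then invoke Theorem~\ref{thm:alternative_structure}. The first three properties are immediate. (P1) holds because $f(x,x)=2=f(\Delta,\Delta)$. (P2) follows from $x^2+\Delta^2>2x\Delta$ for $x<\Delta$. (P3) follows from $\partial f/\partial y=\frac{1}{x}-\frac{x}{y^2}=\frac{y^2-x^2}{xy^2}>0$ for $x<y$. Next we compute the constant:
\begin{equation*}
C_\Delta=\frac{\Delta}{\Delta-1}\left(\Delta+\frac{1}{\Delta}-2\right)=\frac{\Delta}{\Delta-1}\cdot\frac{(\Delta-1)^2}{\Delta}=\Delta-1.
\end{equation*}
This exact simplification is what makes the rational case easy.

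For (P4), with $t<x$ we have $\left|\partial f(t,x)/\partial t\right|=\frac{x}{t^2}-\frac{1}{x}$. Multiplying by $t^2$, the condition becomes
\begin{equation*}
\Delta-1> x-\frac{t^2}{x}.
\end{equation*}
This holds because $x\le\Delta-1$ and $t^2/x>0$.

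For (P5), we write $f(\Delta,\Delta)-f(x,\Delta)=2-\frac{x}{\Delta}-\frac{\Delta}{x}=-\frac{(\Delta-x)^2}{x\Delta}$ and $C_\Delta\left(\frac1x-\frac1\Delta\right)=\frac{(\Delta-1)(\Delta-x)}{x\Delta}$. Their sum is
\begin{equation*}
\frac{(\Delta-x)\big((\Delta-1)-(\Delta-x)\big)}{x\Delta}=\frac{(\Delta-x)(x-1)}{x\Delta}.
\end{equation*}
This is strictly positive for $2\le x\le\Delta-1$.

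All five properties hold for every $\Delta\ge3$, so Theorem~\ref{thm:alternative_structure} applies directly and gives the characterization. None of the steps is a real obstacle. The only point requiring care is getting the sign and factorization right in (P5), and the factorization above resolves it in closed form. The closed form even gives $G(x,\Delta)=\frac{(\Delta-x)(x-1)}{x\Delta}$ explicitly.
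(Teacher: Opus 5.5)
Your proposal is correct and matches the paper's proof step for step: the same checks of (P1)--(P3), the same simplification $C_\Delta=\Delta-1$, the same reduction of (P4) to $\Delta-1>x-t^2/x$ using $x\le\Delta-1$, and the same closed form $\frac{(x-1)(\Delta-x)}{x\Delta}$ for the (P5) expression. Your remark that this expression is exactly $G(x,\Delta)$ is correct; the paper does not state it explicitly.
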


\begin{proof}
For the $SDD$ index, the weight function is $f(x,y) = \frac{x^2+y^2}{xy} = \frac{x}{y} + \frac{y}{x}$. 
(P1): $f(x,x) = 2 = f(\Delta,\Delta)$. 
(P2): For $x < \Delta$, $(x-\Delta)^2 > 0$ gives $x^2+\Delta^2 > 2x\Delta$, so $f(x,\Delta) = \frac{x^2+\Delta^2}{x\Delta} > 2 = f(\Delta,\Delta)$. 
(P3): For $x < y$, $\frac{\partial f}{\partial y} = \frac{y^2-x^2}{xy^2} > 0$.

The multiplier $C_\Delta$ simplifies to
\begin{equation*}
    C_\Delta = \frac{\Delta}{\Delta - 1} \big( f(1,\Delta) - f(\Delta,\Delta) \big) = \frac{\Delta}{\Delta - 1} \left( \Delta + \frac{1}{\Delta} - 2 \right) = \frac{\Delta^2 - 2\Delta + 1}{\Delta - 1} = \Delta - 1.
\end{equation*}

For (P4), we have $\left| \frac{\partial f(t,x)}{\partial t} \right| = \frac{x^2-t^2}{xt^2}$. The condition $\frac{C_\Delta}{t^2} > \left| \frac{\partial f(t,x)}{\partial t} \right|$ reduces to
\begin{equation*}
    \frac{\Delta - 1}{t^2} > \frac{x^2-t^2}{xt^2} \iff \Delta - 1 > \frac{x^2-t^2}{x} = x - \frac{t^2}{x}, \quad 1 \le t < x < \Delta.
\end{equation*}
Since $x \le \Delta - 1$ and $t \ge 1$, we have $x - \frac{t^2}{x} < x \le \Delta - 1$, so (P4) holds for all $\Delta \ge 3$.

For (P5), substituting $f$ and $C_\Delta$ into $f(\Delta,\Delta) - f(x,\Delta) + C_\Delta \left( \frac{1}{x} - \frac{1}{\Delta} \right)$ gives
\begin{equation*}
    2 - \left( \frac{x}{\Delta} + \frac{\Delta}{x} \right) + (\Delta - 1) \left( \frac{\Delta - x}{x\Delta} \right) = \frac{2x\Delta - x^2 - \Delta^2 + \Delta^2 - x\Delta - \Delta + x}{x\Delta} = \frac{(x-1)(\Delta-x)}{x\Delta}.
\end{equation*}
Since $2 \le x \le \Delta - 1$, both $(x-1)$ and $(\Delta-x)$ are positive, verifying (P5). By Theorem \ref{thm:alternative_structure}, the result holds for all $\Delta \ge 3$.
\end{proof}

A similar universal validity across all $\Delta \ge 3$ holds for the Hyperbolic Sombor ($HSO$) index.

\begin{corollary}[Extremal Trees for the $HSO$ Index]
\label{cor:hso_all_delta}
Let $T$ be a tree of order $n$ and maximum degree $\Delta \ge 3$. The trees maximizing the Hyperbolic Sombor ($HSO$) index are uniquely characterized by Theorem \ref{thm:alternative_structure} for all $\Delta \ge 3$.
\end{corollary}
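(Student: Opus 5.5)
The plan is to verify properties \textbf{(P1)}--\textbf{(P5)} for the weight function $f(x,y)=\sqrt{x^2+y^2}/x$ (for $x\le y$). The conclusion then follows from Theorem~\ref{thm:alternative_structure}, exactly as in Corollaries~\ref{cor:drso_all_delta} and \ref{cor:sdd_all_delta}.

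\textbf{(P1)--(P3).} These are immediate.
\begin{itemize}
\item (P1): $f(x,x)=\sqrt{2x^2}/x=\sqrt2$.
\item (P2): for $x<\Delta$ we have $x^2+\Delta^2>2x^2$, so $f(x,\Delta)=\sqrt{x^2+\Delta^2}/x>\sqrt2$.
\item (P3): for $x<y$, $\partial f/\partial y = y/\bigl(x\sqrt{x^2+y^2}\bigr)>0$.
\end{itemize}
We also record the multiplier
\[
C_\Delta=\frac{\Delta}{\Delta-1}\bigl(\sqrt{\Delta^2+1}-\sqrt2\bigr).
\]
Since $\sqrt{\Delta^2+1}>\Delta$, this gives $C_\Delta>\Delta(\Delta-\sqrt2)/(\Delta-1)$. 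The inequality $\Delta(\Delta-\sqrt2)/(\Delta-1)\ge \Delta-1$ reduces to $(2-\sqrt2)\Delta\ge 1$, which holds for $\Delta\ge3$. Hence $C_\Delta>\Delta-1$, the same bound as in the $DRSO$ case.

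\textbf{(P4).} For $1\le t<x$ the smaller argument is $t$, so $f(t,x)=\sqrt{t^2+x^2}/t$. Differentiating gives
\[
\frac{\partial f(t,x)}{\partial t}=-\frac{x^2}{t^2\sqrt{t^2+x^2}}.
\]
Condition (P4) therefore reduces to $C_\Delta> x^2/\sqrt{t^2+x^2}$. The right-hand side is less than $x\le\Delta-1<C_\Delta$, so (P4) holds for all $\Delta\ge3$.

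\textbf{(P5).} This is the step I expect to be the main obstacle; in the $DRSO$ case it required a monotonicity argument. Here I would multiply the (P5) expression by $x>0$ and study
\[
H(x)=\sqrt2\,x-\sqrt{x^2+\Delta^2}+C_\Delta\Bigl(1-\frac{x}{\Delta}\Bigr)
\]
on $[1,\Delta]$. The key observations are:
\begin{itemize}
\item $H(1)=\sqrt2-\sqrt{1+\Delta^2}+C_\Delta\frac{\Delta-1}{\Delta}=0$, by the definition of $C_\Delta$;
\item $H(\Delta)=\sqrt2\Delta-\sqrt2\Delta+0=0$;
\item $H$ is strictly concave, because $-\sqrt{x^2+\Delta^2}$ has second derivative $-\Delta^2/(x^2+\Delta^2)^{3/2}<0$ and the remaining terms are linear.
\end{itemize}
A strictly concave function that vanishes at both endpoints of an interval is strictly positive in its interior. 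Hence $H(x)>0$ for $2\le x\le\Delta-1$, which is (P5).

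With (P1)--(P5) verified for every $\Delta\ge3$, Theorem~\ref{thm:alternative_structure} characterizes the $HSO$-maximizing trees.
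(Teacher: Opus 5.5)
Your proof is correct. For (P1)--(P4) it matches the paper's argument; the paper also notes that $x^2/\sqrt{t^2+x^2}$ is maximized at $t=1$, whereas you bound it by $x$ directly, which is the same estimate. The genuine difference is in (P5).

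\textbf{The paper's route.} It isolates $C_\Delta$, rewriting (P5) as $C_\Delta > R(x,\Delta)=\frac{\Delta}{\Delta-x}\bigl(\sqrt{x^2+\Delta^2}-x\sqrt2\bigr)$. It observes $R(1,\Delta)=C_\Delta$. It then proves $R$ strictly decreasing by a quotient-rule computation, which simplifies to $\frac{\Delta^2}{(\Delta-x)^2}\bigl[\frac{x+\Delta}{\sqrt{x^2+\Delta^2}}-\sqrt2\bigr]<0$.

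\textbf{Your route.} You clear only the factor $1/x$. You then use that $H$ vanishes at $x=1$ and $x=\Delta$ and is strictly concave.

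\textbf{How they relate.} The two are linked by $H(x)=\frac{\Delta-x}{\Delta}\bigl(C_\Delta-R(x,\Delta)\bigr)$. The paper's $R$ is $-\Delta$ times the slope of the chord of the convex function $\sqrt{x^2+\Delta^2}-\sqrt2\,x$ from $x$ to $\Delta$. So the paper's monotonicity of $R$ is the chord-slope form of exactly the convexity you invoke directly.

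\textbf{What your version buys.} It avoids the derivative simplification, and it generalizes at once. For any weight function, $x\bigl(f(\Delta,\Delta)-f(x,\Delta)\bigr)+C_\Delta(1-x/\Delta)$ vanishes at $x=1$ by the definition of $C_\Delta$, and at $x=\Delta$ trivially. Hence (P5) holds whenever $x\mapsto x\,f(x,\Delta)$ is strictly convex on $[1,\Delta]$. This would, for instance, settle (P5) for $DRSO$, where $x\,f(x,\Delta)=\sqrt{x^4+\Delta^4}/\Delta$, in one line instead of the paper's polynomial estimate.

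\textbf{What the paper's version buys.} The threshold form $C_\Delta>R(x,\Delta)$ keeps the treatment uniform with the negative results. The same comparison of $R(1,\Delta)$ with $R(2,\Delta)$ is what the paper uses to show (P5) fails for $DSO$ when $\Delta\ge5$. A convexity criterion, being only sufficient, cannot detect such a failure.
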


\begin{proof}
For $x \le y$, the $HSO$ index weight function is $f(x,y) = \frac{\sqrt{x^2+y^2}}{x}$. 
(P1): $f(x,x) = \frac{\sqrt{2x^2}}{x} = \sqrt{2} = f(\Delta,\Delta)$. 
(P2): For $x < \Delta$, $f(x,\Delta) = \sqrt{1 + (\Delta/x)^2} > \sqrt{2}$. 
(P3): For $x < y$, $\frac{\partial f}{\partial y} = \frac{y}{x\sqrt{x^2+y^2}} > 0$.

The structural multiplier is $C_\Delta = \frac{\Delta}{\Delta - 1} \left( \sqrt{1+\Delta^2} - \sqrt{2} \right)$.

For (P4), $\left| \frac{\partial f(t,x)}{\partial t} \right| = \frac{x^2}{t^2\sqrt{t^2+x^2}}$. The condition $\frac{C_\Delta}{t^2} > \left| \frac{\partial f(t,x)}{\partial t} \right|$ reduces to:
\begin{equation*}
    C_\Delta > \frac{x^2}{\sqrt{t^2+x^2}}, \quad 1 \le t < x < \Delta.
\end{equation*}
The right-hand side is maximized at $t=1$, giving the upper bound $\frac{x^2}{\sqrt{1+x^2}} < x \le \Delta - 1$. Since $\sqrt{\Delta^2+1} > \Delta$, we have:
\begin{equation*}
    C_\Delta = \frac{\Delta(\sqrt{\Delta^2+1} - \sqrt{2})}{\Delta-1} > \frac{\Delta^2 - \Delta\sqrt{2}}{\Delta-1}.
\end{equation*}
The inequality $\frac{\Delta^2 - \Delta\sqrt{2}}{\Delta-1} \ge \Delta - 1$ reduces to $\Delta(2-\sqrt{2}) \ge 1$, which holds for $\Delta \ge 3$. Thus, $C_\Delta > \Delta - 1 > \frac{x^2}{\sqrt{1+x^2}}$, satisfying (P4).

For (P5), substituting $f(x,y)$ into $f(\Delta,\Delta) - f(x,\Delta) + C_\Delta \left( \frac{1}{x} - \frac{1}{\Delta} \right) > 0$ and isolating $C_\Delta$ yields:
\begin{equation}\label{eq:hso_bound}
    C_\Delta > \frac{x\Delta}{\Delta - x} \left( \frac{\sqrt{x^2+\Delta^2}}{x} - \sqrt{2} \right) = \frac{\Delta}{\Delta - x} \left( \sqrt{x^2+\Delta^2} - x\sqrt{2} \right).
\end{equation}

Let $R(x, \Delta)$ be the right-hand side of \eqref{eq:hso_bound}. Since $R(1, \Delta) = C_\Delta$, (P5) requires $R(1, \Delta) > R(x, \Delta)$ for $2 \le x \le \Delta - 1$. The derivative with respect to $x$ is:
\begin{equation*}
    \frac{\partial R}{\partial x} = \frac{\Delta}{(\Delta-x)^2} \left[ \left( \frac{x}{\sqrt{x^2+\Delta^2}} - \sqrt{2} \right)(\Delta - x) + \sqrt{x^2+\Delta^2} - x\sqrt{2} \right].
\end{equation*}
Simplifying the expression yields:
\begin{equation*}
    \frac{\partial R}{\partial x} = \frac{\Delta^2}{(\Delta-x)^2} \left[ \frac{x+\Delta}{\sqrt{x^2+\Delta^2}} - \sqrt{2} \right].
\end{equation*}
The condition $\frac{x+\Delta}{\sqrt{x^2+\Delta^2}} < \sqrt{2}$ is equivalent to:
\begin{equation*}
    (x+\Delta)^2 < 2(x^2+\Delta^2) \implies x^2 + 2x\Delta + \Delta^2 < 2x^2 + 2\Delta^2 \implies 0 < (\Delta - x)^2.
\end{equation*}
Since $(\Delta - x)^2 > 0$ for $x < \Delta$, we have $\frac{\partial R}{\partial x} < 0$. Thus, $R(x, \Delta)$ is strictly decreasing on $[1, \Delta)$. 

Therefore, $R(1, \Delta) > R(x, \Delta)$ for all $x \ge 2$, satisfying (P5). With (P1)–(P5) verified for all $\Delta \ge 3$, Theorem \ref{thm:alternative_structure} applies.
\end{proof}

In contrast to the preceding three invariants, the Arithmetic-Geometric ($AG$) and Diminished Sombor ($DSO$) indices exhibit finite degree thresholds beyond which conditions \textbf{(P4)} or \textbf{(P5)} fail and the basis $\mathcal{B} = \{(1,\Delta), (\Delta,\Delta)\}$ ceases to be optimal.

\begin{corollary}[Extremal Trees for the $AG$ Index]
\label{cor:ag_small_delta}
Let $T$ be a tree of order $n$ with maximum degree $3 \le \Delta \le 8$. The trees maximizing the Arithmetic-Geometric ($AG$) index are characterized by Theorem \ref{thm:alternative_structure}.
\end{corollary}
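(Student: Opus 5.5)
The plan is to verify conditions \textbf{(P1)}--\textbf{(P5)} for $f(x,y)=\frac{x+y}{2\sqrt{xy}}=\frac12\bigl(\sqrt{x/y}+\sqrt{y/x}\bigr)$ and each $3\le\Delta\le 8$, and then invoke Theorem~\ref{thm:alternative_structure}. The first three are immediate. (P1) holds because $f(x,x)=1$. (P2) is strict AM--GM: $x+\Delta>2\sqrt{x\Delta}$ for $x<\Delta$. For (P3), a direct computation gives $\partial f/\partial y=\frac{y-x}{4y\sqrt{xy}}>0$ for $x<y$. Next I will put $C_\Delta$ in closed form. Since $f(1,\Delta)-1=\frac{(\sqrt\Delta-1)^2}{2\sqrt\Delta}$,
\begin{equation*}
C_\Delta=\frac{\Delta}{\Delta-1}\cdot\frac{(\sqrt\Delta-1)^2}{2\sqrt\Delta}=\frac{\sqrt\Delta\,(\sqrt\Delta-1)}{2(\sqrt\Delta+1)}.
\end{equation*}
This grows only like $\sqrt\Delta/2$, and that slow growth is why a finite threshold appears.

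For (P4), I have $\bigl|\partial f(t,x)/\partial t\bigr|=\frac{x-t}{4t\sqrt{tx}}$ for $t<x$. Hence the condition $C_\Delta/t^2>|\partial_t f|$ becomes
\begin{equation*}
C_\Delta>\frac{\sqrt t\,(x-t)}{4\sqrt x},\qquad 1\le t<x\le\Delta-1 .
\end{equation*}
The function $t\mapsto\sqrt t(x-t)$ is maximized at $t=x/3$, which lies in $[1,x)$ for $x\ge3$. The maximum value of the right-hand side is therefore $\frac{x}{6\sqrt3}$, which is at most $\frac{\Delta-1}{6\sqrt3}$ (for $x=2$ the endpoint $t=1$ gives an even smaller value). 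So (P4) reduces to the single inequality $C_\Delta>\frac{\Delta-1}{6\sqrt3}$. I would check it by evaluating both sides for $\Delta=3,\dots,8$. At $\Delta=8$ it is very tight: $C_8\approx0.6754$ against $7/(6\sqrt3)\approx0.6736$. The check must therefore be done with the integer bound $x\le\Delta-1$, not with a continuous $x<\Delta$. It is cleanest to square and compare rational expressions in $\sqrt\Delta$, so the margin is exact.

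For (P5), I use $f(x,\Delta)-1=\frac{(\sqrt\Delta-\sqrt x)^2}{2\sqrt{x\Delta}}$ and $\frac1x-\frac1\Delta=\frac{(\sqrt\Delta-\sqrt x)(\sqrt\Delta+\sqrt x)}{x\Delta}$. Writing $s=\sqrt x$ and $D=\sqrt\Delta$, dividing by the positive factor $D-s$ turns (P5) into
\begin{equation*}
2C_\Delta\,(D+s)>sD\,(D-s),\qquad 2\le s^2\le \Delta-1 .
\end{equation*}
The left side increases in $s$, while $sD(D-s)$ is a concave parabola in $s$ with maximum $D^3/4$ at $s=D/2$. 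A sufficient, easily checked condition is therefore $2C_\Delta(D+\sqrt2)\ge D^3/4$. If that crude bound fails for some $\Delta\le8$, I will fall back on checking the finitely many integer pairs $(x,\Delta)$ with $2\le x\le\Delta-1$ directly.

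The main obstacle is the tightness of (P4) at $\Delta=8$. That step needs careful, essentially exact arithmetic, together with the observation that only integer $x\le\Delta-1$ matter. Once (P1)--(P5) are established for $3\le\Delta\le 8$, Theorem~\ref{thm:alternative_structure} gives the extremal structure. The converse direction (failure for $\Delta\ge9$), stated in the abstract, is not part of this corollary. It would follow from the same reduced (P4) inequality, since $\frac{\Delta-1}{6\sqrt3}$ grows linearly while $C_\Delta$ grows like $\sqrt\Delta/2$.
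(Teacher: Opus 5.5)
Your proposal is correct and follows the paper's proof: you use the same closed form for $C_\Delta$, and you reduce (P4) in the same way, via the maximizer $t=x/3$ and the integer bound $x\le\Delta-1$, to $C_\Delta>\frac{\sqrt3(\Delta-1)}{18}$; the paper settles this as the quadratic condition $\Delta-(3\sqrt3-2)\sqrt\Delta+1<0$ rather than by a case-by-case check. The only variation is in (P5), where the paper asserts without proof that the right-hand side decreases in $x\ge 2$ and compares at $x=2$, whereas your decoupled sufficient condition $4(\sqrt\Delta-1)(\sqrt\Delta+\sqrt2)>\Delta(\sqrt\Delta+1)$ avoids that monotonicity claim and does hold for every $3\le\Delta\le 8$ (at $\Delta=8$ it becomes $40>28\sqrt2$), so your fallback is never needed.
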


\begin{proof}
For the $AG$ index, the weight function $f(x,y) = \frac{x+y}{2\sqrt{xy}}$ satisfies (P1), (P2), and (P3) for all $\Delta \ge 3$: $f(x,x) = 1 = f(\Delta,\Delta)$, $f(x,\Delta) > 1$ for $x < \Delta$ by the AM-GM inequality, and $\frac{\partial f}{\partial y} = \frac{y-x}{4y\sqrt{xy}} > 0$ for $x < y$.

The multiplier $C_\Delta$ simplifies to $C_\Delta = \frac{\Delta}{\Delta - 1} \left( \frac{1+\Delta}{2\sqrt{\Delta}} - 1 \right) = \frac{\sqrt{\Delta}(\sqrt{\Delta}-1)}{2(\sqrt{\Delta}+1)}$.

For (P4), we require $C_\Delta > \frac{t(x-t)}{4t\sqrt{tx}} = \frac{\sqrt{t}(x-t)}{4\sqrt{x}}$ for $1 \le t < x < \Delta$. Applying the AM-GM inequality to $2t$, $(x-t)$, and $(x-t)$ gives
\begin{equation}
    2t(x-t)^2 \le \left( \frac{2t + (x-t) + (x-t)}{3} \right)^3 = \frac{8x^3}{27}.
\end{equation}
Taking the square root yields $\sqrt{t}(x-t) \le \frac{2x\sqrt{3x}}{9}$, with equality at $t = x/3$. Since $x \le \Delta - 1$, (P4) holds whenever
\begin{equation}
    C_\Delta > \frac{\sqrt{3}(\Delta-1)}{18}.
\end{equation}
Substituting $C_\Delta$, dividing by $(\sqrt{\Delta}-1) > 0$, and clearing denominators yields
\begin{equation}
    \Delta - (3\sqrt{3}-2)\sqrt{\Delta} + 1 < 0,
\end{equation}
which holds for integer $\Delta \ge 3$ if and only if $\Delta \le 8$ (as the upper root is $\Delta_{\max} \approx 8.09$).

Property (P5) requires $1 - \frac{x+\Delta}{2\sqrt{x\Delta}} + C_\Delta \left( \frac{\Delta-x}{x\Delta} \right) > 0$, or equivalently,
\begin{equation}
    C_\Delta > \frac{\sqrt{x\Delta}(\sqrt{\Delta}-\sqrt{x})}{2(\sqrt{\Delta}+\sqrt{x})}.
\end{equation}
Since the right-hand side decreases with respect to $x \ge 2$, its maximum on $[2, \Delta-1]$ is attained at $x = 2$:
\begin{equation}
    \frac{\sqrt{\Delta}(\sqrt{\Delta}-1)}{2(\sqrt{\Delta}+1)} > \frac{\sqrt{2\Delta}(\sqrt{\Delta}-\sqrt{2})}{2(\sqrt{\Delta}+\sqrt{2})}.
\end{equation}
Simplifying this inequality gives
\begin{equation}
    (\sqrt{2}-1)\Delta - \sqrt{\Delta} - (2-\sqrt{2}) < 0,
\end{equation}
which likewise holds for integer $\Delta \ge 3$ if and only if $\Delta \le 8$ ($\Delta_{\max} \approx 8.42$). 

Since both (P4) and (P5) hold for $3 \le \Delta \le 8$ and fail for $\Delta \ge 9$, Theorem \ref{thm:alternative_structure} characterizes the extremal trees in this range.
\end{proof}

For the Diminished Sombor ($DSO$) index, the validity domain of the basis $\mathcal{B} = \{(1,\Delta), (\Delta,\Delta)\}$ narrows further to the class of molecular trees ($\Delta \le 4$), as condition \textbf{(P5)} is violated for all $\Delta \ge 5$.

\begin{corollary}[Extremal Trees for the $DSO$ Index]
\label{cor:dso_small_delta}
Let $T$ be a tree of order $n$ with maximum degree $3 \le \Delta \le 4$. The maximum value of the Diminished Sombor ($DSO$) index is uniquely achieved by the extremal configurations dictated by Theorem \ref{thm:alternative_structure}.
\end{corollary}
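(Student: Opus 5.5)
Since Theorem~\ref{thm:alternative_structure} already gives the conclusion once (P1)--(P5) hold, the plan is to verify these five conditions for $f(x,y)=\frac{\sqrt{x^2+y^2}}{x+y}$ when $\Delta\in\{3,4\}$. The range is finite, so only finitely many integer pairs $(x,\Delta)$ enter through Lemma~\ref{lem:universal_bounds}. As in Corollary~\ref{cor:ag_small_delta}, I will read (P4) with $t$ a real variable in $[1,x)$ and $x$ an integer with $2\le x\le \Delta-1$; this is all that Lemma~\ref{lem:universal_bounds}(iv) needs. The pairs $(\Delta,x)$ to treat are then $(3,2)$, $(4,2)$ and $(4,3)$.

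\emph{(P1)--(P3), valid for every $\Delta$.} First, $f(x,x)=\frac{\sqrt2}{2}$. Next, $f(x,\Delta)>\frac{\sqrt2}{2}$ is equivalent to $2(x^2+\Delta^2)>(x+\Delta)^2$, that is, to $(\Delta-x)^2>0$. Finally, a direct computation gives
\begin{equation*}
\frac{\partial f}{\partial y}=\frac{x(y-x)}{(x+y)^2\sqrt{x^2+y^2}}>0 \qquad (x<y).
\end{equation*}
Next I compute $C_\Delta=\frac{\Delta}{\Delta-1}\bigl(\frac{\sqrt{1+\Delta^2}}{1+\Delta}-\frac{\sqrt2}{2}\bigr)$. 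This gives $C_3=\frac32\bigl(\frac{\sqrt{10}}4-\frac{\sqrt2}2\bigr)\approx0.125$ and $C_4=\frac43\bigl(\frac{\sqrt{17}}5-\frac{\sqrt2}2\bigr)\approx0.157$.

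\emph{(P4).} By symmetry of $f$,
\begin{equation*}
\left|\frac{\partial f(t,x)}{\partial t}\right|=\frac{t(x-t)}{(x+t)^2\sqrt{t^2+x^2}},
\end{equation*}
so (P4) becomes $C_\Delta>g_x(t):=\frac{t^3(x-t)}{(x+t)^2\sqrt{t^2+x^2}}$ on $[1,x)$. I will bound $g_x$ crudely and uniformly. I use $t^3(x-t)\le \frac{27x^4}{256}$ (by AM--GM, with the maximum at $t=3x/4$), together with $(x+t)^2\ge(x+1)^2$ and $\sqrt{t^2+x^2}\ge\sqrt{x^2+1}$. For $x=2$ this yields $g_2\le \frac{27\cdot16}{256\cdot 9\sqrt5}\approx 0.084<C_3<C_4$. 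For $x=3$ it yields $g_3\le\frac{27\cdot81}{256\cdot16\sqrt{10}}\approx0.169$, which is not quite below $C_4$. So for $x=3$ I will use the sharper denominator bounds $(x+t)^2\ge(3+t)^2$ and $\sqrt{9+t^2}$ evaluated near the maximiser. Alternatively, I will simply split $[1,3)$ into two subintervals and bound the numerator and denominator separately on each; numerically the true maximum is about $0.083<0.157$, so there is ample room.

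\emph{(P5), the main obstacle.} Here there are three explicit numerical inequalities, and the margins are tiny. For $(\Delta,x)=(3,2)$ the expression is $\frac{\sqrt2}2-\frac{\sqrt{13}}5+\frac{C_3}{6}\approx0.007$. For $(4,3)$ it is $\frac{\sqrt2}2-\frac57+\frac{C_4}{12}\approx0.006$. For $(4,2)$ it simplifies exactly to
\begin{equation*}
\frac{\sqrt2}{3}+\frac{\sqrt{17}}{15}-\frac{\sqrt5}{3}>0 \iff 5\sqrt2+\sqrt{17}>5\sqrt5 \iff 67+10\sqrt{34}>125 \iff 10\sqrt{34}>58 \iff 3400>3364,
\end{equation*}
which holds, though narrowly. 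I will treat the other two cases in the same way: write each as a sum of surds, move terms across so that both sides are positive, and square (twice if necessary) to reduce it to an integer comparison. Decimal estimates are too delicate to trust here.

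With (P1)--(P5) verified for $\Delta=3,4$, Theorem~\ref{thm:alternative_structure} gives the claimed extremal structure. Uniqueness of the maximisers comes from the strict equality conditions in the proof of that theorem.
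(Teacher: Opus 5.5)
Your overall plan is the paper's: verify (P1)--(P5) for $\Delta\in\{3,4\}$ and then invoke Theorem~\ref{thm:alternative_structure}. Your treatment of (P1)--(P3) and (P5) is sound. The exact surd reduction to $3400>3364$ for $(\Delta,x)=(4,2)$ is in fact more trustworthy than the paper's decimal comparison $R(1,4)\approx 0.156>R(2,4)\approx 0.153$.

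The gap is in (P4): you differentiated with respect to the wrong argument. For $f(t,x)=\frac{\sqrt{t^2+x^2}}{t+x}$,
\[
\frac{\partial f(t,x)}{\partial t}=\frac{t(t+x)-(t^2+x^2)}{(t+x)^2\sqrt{t^2+x^2}}=\frac{x(t-x)}{(t+x)^2\sqrt{t^2+x^2}},
\]
so the prefactor is $x$, not $t$. What you wrote is the derivative in the \emph{second} slot, i.e.\ the (P3) formula evaluated at $(t,x)$. Consequently (P4) reads
\[
C_\Delta>H(t,x):=\frac{t^2x(x-t)}{(x+t)^2\sqrt{t^2+x^2}},
\]
which exceeds your $g_x(t)$ by the factor $x/t>1$. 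Bounding $g_x$ below $C_\Delta$ therefore does not establish (P4).

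This is not cosmetic, because it removes the ``ample room'' your argument relies on. For $(\Delta,x)=(4,3)$, the true supremum of $H(t,3)$ on $[1,3)$ is about $0.1495$, attained near $t\approx 1.57$. Against $C_4\approx 0.1567$, that is a margin of under $5\%$. The crude bounds $t^2(3-t)\le 4$, $(3+t)^2\ge 16$, $\sqrt{9+t^2}\ge\sqrt{10}$ give only $H\le 12/(16\sqrt{10})\approx 0.237$, which is useless.

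For $x=2$ your crude approach still survives with the corrected formula: $H(t,2)\le \frac{2\cdot 32/27}{9\sqrt5}\approx 0.118<C_3\approx 0.125$. For $(4,3)$, however, you need one of the following:
\begin{enumerate}[(a)]
\item a genuine derivative analysis of $H(\cdot,3)$ that locates its maximiser, as the paper does;
\item a subdivision of $[1,3)$ fine enough, with width of order $0.03$ near the peak, to certify a margin below $5\%$;
\item bypassing (P4) altogether.
\end{enumerate}
Option (c) works because the proof of Theorem~\ref{thm:alternative_structure} uses Lemma~\ref{lem:universal_bounds}(iv) only at integer degrees. It therefore suffices to check $\Phi(i)>\Phi(\Delta)$, with $\Phi(t)=C_\Delta/t-f(t,x)$, for four pairs: $(i,x)=(1,2)$ when $\Delta=3$, and $(1,2),(1,3),(2,3)$ when $\Delta=4$. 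These hold with margins of roughly $0.03$ to $0.12$, and each can be certified by the same exact-surd technique you used for (P5).
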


\begin{proof}
For the $DSO$ index, the weight function is $f(x,y) = \frac{\sqrt{x^2+y^2}}{x+y}$. 
(P1): $f(x,x) = \frac{\sqrt{2x^2}}{2x} = \frac{\sqrt{2}}{2} = f(\Delta,\Delta)$. 
(P2): For $x < \Delta$, $(x-\Delta)^2 > 0 \implies 2x^2 + 2\Delta^2 > (x+\Delta)^2$, establishing $f(x,\Delta) = \frac{\sqrt{x^2+\Delta^2}}{x+\Delta} > \frac{\sqrt{2}}{2}$. 
(P3): For $x < y$, $\frac{\partial f}{\partial y} = \frac{x(y-x)}{\sqrt{x^2+y^2}(x+y)^2} > 0$.

The structural multiplier is $C_\Delta = \frac{\Delta}{\Delta - 1} \left( \frac{\sqrt{1+\Delta^2}}{1+\Delta} - \frac{\sqrt{2}}{2} \right)$. 

For (P5), substituting $f(x,y)$ into $f(\Delta,\Delta) - f(x,\Delta) + C_\Delta \left( \frac{1}{x} - \frac{1}{\Delta} \right) > 0$ and isolating $C_\Delta$ yields:
\begin{equation}\label{eq:dso_bound}
    C_\Delta > \frac{x\Delta}{\Delta - x} \left( \frac{\sqrt{x^2+\Delta^2}}{x+\Delta} - \frac{\sqrt{2}}{2} \right).
\end{equation}

Let $R(x, \Delta)$ be the right-hand side of \eqref{eq:dso_bound}. Since $C_\Delta = R(1, \Delta)$ by definition, property (P5) is equivalent to requiring $R(1, \Delta) > R(x, \Delta)$ for all $2 \le x \le \Delta - 1$. Evaluating the condition at $x = 2$, we define the difference function $D(\Delta) = R(1, \Delta) - R(2, \Delta)$:
\begin{equation}\label{eq:dso_diff}
    D(\Delta) = \frac{\Delta}{\Delta - 1} \left( \frac{\sqrt{1+\Delta^2}}{1+\Delta} - \frac{\sqrt{2}}{2} \right) - \frac{2\Delta}{\Delta - 2} \left( \frac{\sqrt{4+\Delta^2}}{2+\Delta} - \frac{\sqrt{2}}{2} \right).
\end{equation}

To show (P5) fails for $\Delta \ge 5$, we separate $D(\Delta)$ into two components, $A(\Delta)$ and $B(\Delta)$:
\begin{equation*}
    D(\Delta) = \underbrace{ \frac{\Delta\sqrt{1+\Delta^2}}{\Delta^2 - 1} - \frac{2\Delta\sqrt{4+\Delta^2}}{\Delta^2 - 4} }_{A(\Delta)} + \underbrace{ \frac{\sqrt{2}\Delta^2}{2(\Delta-1)(\Delta-2)} }_{B(\Delta)}.
\end{equation*}

For $\Delta \ge 5$, bounding $A(\Delta)$ using the algebraic inequalities $\sqrt{\Delta^2+1} < \Delta + \frac{1}{2\Delta}$ and $\sqrt{\Delta^2+4} > \Delta + \frac{2}{\Delta} - \frac{2}{\Delta^3}$ gives:
\begin{align*}
    A(\Delta) &< \frac{\Delta^2 + 1/2}{\Delta^2-1} - \frac{2\Delta^2 + 4 - 4/\Delta^2}{\Delta^2-4} \\
    &= -1 + \frac{3}{2(\Delta^2-1)} - \frac{12\Delta^2 - 4}{\Delta^2(\Delta^2-4)}.
\end{align*}

Let $U(\Delta)$ be this upper envelope for $A(\Delta)$. Then $D(\Delta) < U(\Delta) + B(\Delta)$. Reducing $U(\Delta) + B(\Delta)$ to a common denominator yields a rational function whose numerator is a $6$-th degree polynomial in $\Delta$. Omitting the detailed algebraic expansion due to its prohibitive length, the leading term of this numerator evaluates to $\left(\frac{\sqrt{2}}{2} - 1\right)\Delta^6$. Since the leading coefficient is negative, the polynomial diverges to negative infinity. Root analysis confirms the numerator is strictly negative for all $\Delta \ge 5$. Thus, $D(\Delta) < 0$, which implies $R(1, \Delta) < R(2, \Delta)$ for all $\Delta \ge 5$, establishing a definitive structural violation of (P5).

For the remaining isolated domain $\Delta \in \{3, 4\}$, evaluating $R(1, \Delta) > R(x, \Delta)$ explicitly yields:
\begin{itemize}
    \item For $\Delta = 3$: $R(1,3) \approx 0.125$ and $R(2,3) \approx 0.084$. Since $R(1,3) > R(2,3)$, (P5) holds.
    \item For $\Delta = 4$: $R(1,4) \approx 0.156$. The internal degrees yield $R(2,4) \approx 0.153$ and $R(3,4) \approx 0.086$. Since $R(1,4) > \max\{R(2,4), R(3,4)\}$, (P5) holds.
\end{itemize}

Finally, to verify (P4) for the valid domain $\Delta \in \{3,4\}$, the condition $\frac{C_\Delta}{t^2} > \left| \frac{\partial f(t,x)}{\partial t} \right|$ rearranges to $C_\Delta > H(t,x)$, where $H(t,x) = \frac{t^2 x(x-t)}{\sqrt{t^2+x^2}(t+x)^2}$. Since the internal degree $x$ strictly takes integer values, we apply differential analysis to $H(t,x)$ with respect to $t \in [1, x)$ to establish its exact supremum:
\begin{itemize}
    \item For $\Delta = 3$ ($x=2$): Equating the derivative to zero reveals a unique maximum near $t \approx 1.04$, giving $\sup_{t} H(t,2) < 0.10$. Since $C_3 \approx 0.125 > 0.10$, (P4) is unconditionally satisfied.
    \item For $\Delta = 4$ ($x \in \{2, 3\}$): Differential analysis bounds the supremum for $x=2$ strictly below $0.10$, and for $x=3$ reveals a maximum near $t \approx 1.57$, guaranteeing $\sup_{t} H(t,3) < 0.15$. Since $C_4 \approx 0.156 > 0.15$, (P4) strictly holds.
\end{itemize}

Since the required axiomatic properties (P1)–(P5) hold simultaneously if and only if $\Delta \le 4$, Theorem \ref{thm:alternative_structure} characterizes the extremal topology uniquely and exclusively for this domain.
\end{proof}

\subsection{Explicit Maximum Values and Constructive Realization of Extremal Trees}

Having established the validity domains of Theorem~\ref{thm:alternative_structure} for specific invariants, we now derive the explicit maximum value of $TI(T)$ and provide a unified constructive characterization of the extremal tree family $\mathcal{T}_{\mathrm{opt}}(n, \Delta)$. From Theorem~\ref{thm:alternative_structure}, the exact edge multiplicities $m_{1,\Delta}$ and $m_{\Delta,\Delta}$ follow directly from \eqref{eq:m_1_delta_final} and \eqref{eq:m_delta_delta_final}, which in turn determine the vertex degree counts $n_1$, $n_x$, and $n_\Delta$:

\begin{itemize}
    \item \textbf{Case 1:} $n \equiv 2 \pmod{\Delta - 1}$. Since $m_{i,j} = 0$ for all $(i,j) \notin \mathcal{B}$, equations \eqref{eq:m_1_delta_final} and \eqref{eq:m_delta_delta_final} immediately reduce to:
    \begin{equation}\label{eq:case1_edges}
        m_{1,\Delta} = \frac{(\Delta - 2)n + 2}{\Delta - 1}, \qquad 
        m_{\Delta,\Delta} = \frac{n - \Delta - 1}{\Delta - 1}.
    \end{equation}
    Because all pendant vertices are incident solely to edges of type $(1,\Delta)$, we have $n_1 = m_{1,\Delta}$ and $n_x = 0$ for all $2 \le x \le \Delta - 1$. Comparing the vertex sum $n = n_1 + n_\Delta$ with the edge sum $n - 1 = m_{1,\Delta} + m_{\Delta,\Delta}$ yields $n_\Delta = m_{\Delta,\Delta} + 1$, giving:
    \begin{equation*}\label{eq:case1_vertices}
        n_1 = \frac{(\Delta - 2)n + 2}{\Delta - 1}, \qquad 
        n_\Delta = m_{\Delta,\Delta} + 1 = \frac{n - 2}{\Delta - 1}.
    \end{equation*}

    \item \textbf{Case 2:} $n \equiv x + 1 \pmod{\Delta - 1}$ for $x \in \{2, \dots, \Delta - 1\}$. Substituting the single non-basic edge multiplicity $m_{x,\Delta} = x$ into \eqref{eq:m_1_delta_final} and \eqref{eq:m_delta_delta_final} gives:
    \begin{align}
        m_{1,\Delta} &= \frac{(\Delta - 2)n + 2}{\Delta - 1} - \frac{\Delta}{\Delta - 1}\left(\frac{1}{x} - \frac{1}{\Delta}\right)x = \frac{(\Delta - 2)n - \Delta + x + 2}{\Delta - 1}, \label{eq:case2_m1delta}\ \\
        m_{\Delta,\Delta} &= \frac{n - \Delta - 1}{\Delta - 1} + \left[\frac{\Delta}{\Delta - 1}\left(\frac{1}{x} - \frac{1}{\Delta}\right) - 1\right]x = \frac{n - 1 - x\Delta}{\Delta - 1}. \label{eq:case2_mdeltadelta}
    \end{align}
    Since all pendant edges are of type $(1,\Delta)$ and all edges incident to vertices of degree $x$ are of type $(x,\Delta)$, vertex-edge incidence gives $n_1 = m_{1,\Delta}$ and $x n_x = m_{x,\Delta} = x \implies n_x = 1$. Comparing the vertex sum $n = n_1 + 1 + n_\Delta$ with the edge sum $n - 1 = m_{1,\Delta} + x + m_{\Delta,\Delta}$ directly yields $n_\Delta = m_{\Delta,\Delta} + x$, from which we obtain:
    \begin{equation*}\label{eq:case2_vertices}
        n_1 = \frac{(\Delta - 2)n - \Delta + x + 2}{\Delta - 1}, \qquad 
        n_x = 1, \qquad 
        n_\Delta = m_{\Delta,\Delta} + x = \frac{n - 1 - x}{\Delta - 1}.
    \end{equation*}
\end{itemize}

Setting $x = \Delta$ naturally subsumes Case~1 into the framework of Case~2: for $x = \Delta$, the condition $n \equiv \Delta + 1 \equiv 2 \pmod{\Delta - 1}$ recovers Case~1, the penalty vanishes ($G(\Delta,\Delta) = 0$), and the total count of vertices of degree $\Delta$ becomes $n_\Delta + 1 = \frac{n - 1 - \Delta}{\Delta - 1} + 1 = \frac{n - 2}{\Delta - 1}$. Consequently, across all congruence classes $n \equiv x + 1 \pmod{\Delta - 1}$ with $x \in \{2, \dots, \Delta\}$, substituting the optimal edge multiplicities into \eqref{eq:ti_expanded} and \eqref{eq:algebraic_G} yields the explicit maximum value of the topological index:
\begin{align}
    \max_{T \in \mathcal{T}(n,\Delta)} TI(T) &= \mathcal{C}(n, \Delta) - x\, G(x, \Delta) \nonumber \\
    &= f(1,\Delta)\frac{(\Delta - 2)n - \Delta + x + 2}{\Delta - 1} + f(\Delta,\Delta)\frac{n - 1 - x\Delta}{\Delta - 1} + x\, f(x, \Delta), \label{eq:max_TI_explicit}
\end{align}
where $\mathcal{T}(n,\Delta)$ denotes the class of all $n$-vertex trees with maximum degree $\Delta$.

Finally, requiring $m_{\Delta,\Delta} \ge 0$ in \eqref{eq:case1_edges} and \eqref{eq:case2_mdeltadelta} imposes the natural order threshold $n \ge \Delta + 1$ in Case~1, and $n \ge x\Delta + 1$ in Case~2. Recognizing the unified parameter structure for $x \in \{2, \dots, \Delta\}$ allows us to formalize the constructive realization of the extremal trees directly via their internal subtrees.

\begin{definition}
\label{def:extremal_trees}
Let $n$ and $\Delta \ge 3$ satisfy the order threshold, and let $x \in \{2, \dots, \Delta\}$ be the unique integer such that $n \equiv x + 1 \pmod{\Delta - 1}$. We define $\mathcal{T}_{\mathrm{opt}}(n, \Delta)$ as the family of all trees constructed from an underlying internal tree $H$ of order $p = \frac{n + \Delta - 2 - x}{\Delta - 1}$ with maximum degree at most $\Delta$, by executing the following procedure:
\begin{enumerate}
    \item Select exactly one vertex $u^* \in V(H)$ such that $d_H(u^*) = x$ if $x < \Delta$, or $1 \le d_H(u^*) \le \Delta$ if $x = \Delta$.
    \item Attach $x - d_H(u^*)$ pendant vertices to $u^*$, and attach $\Delta - d_H(v)$ pendant vertices to each remaining vertex $v \in V(H) \setminus \{u^*\}$.
\end{enumerate}
\end{definition}

By construction, the internal edges of any $T \in \mathcal{T}_{\mathrm{opt}}(n, \Delta)$ are precisely the edges of $H$, totaling $|E(H)| = p - 1$. 

For $x = \Delta$, the procedure assigns $\Delta - d_H(v)$ leaves to every $v \in V(H)$, raising all $p$ internal vertices to degree $\Delta$. This strictly yields $m_{\Delta,\Delta} = |E(H)| = p - 1$ and $m_{x,\Delta} = 0$. The total number of pendant edges evaluates exactly to:
\begin{equation*}
    m_{1,\Delta} = \sum_{v \in V(H)} (\Delta - d_H(v)) = p\Delta - 2|E(H)| = p\Delta - 2(p - 1) = p(\Delta - 2) + 2,
\end{equation*}
which, by substituting $p = n_\Delta = \frac{n - 2}{\Delta - 1}$, perfectly matches the edge distributions of Case 1.

For $x < \Delta$, the selected vertex $u^*$ receives zero leaves, preserving $d_T(u^*) = d_H(u^*) = x$, while the remaining $p - 1$ internal vertices attain degree $\Delta$. Consequently, all $x$ edges incident to $u^*$ exclusively connect to vertices of degree $\Delta$, securing $m_{x,\Delta} = x$. The remaining internal edges connect vertices of degree $\Delta$, yielding $m_{\Delta,\Delta} = |E(H)| - x = p - 1 - x$. The pendant edge count evaluates to:
\begin{align*}
    m_{1,\Delta} &= \sum_{v \in V(H) \setminus \{u^*\}} (\Delta - d_H(v)) = (p - 1)\Delta - \sum_{v \in V(H) \setminus \{u^*\}} d_H(v) \\
    &= (p - 1)\Delta - (2|E(H)| - d_H(u^*)) = (p - 1)\Delta - 2(p - 1) + x = (p - 1)(\Delta - 2) + x.
\end{align*}
Since $p - 1 = n_\Delta = \frac{n - 1 - x}{\Delta - 1}$, this rigorously recovers the parameters of Case 2.

\medskip

Conversely, let $T$ be a tree satisfying the exact parameters of Theorem \ref{thm:alternative_structure}. Deleting all $n_1$ pendant vertices from $T$ yields a connected induced subtree $H$ comprising all internal vertices. The order of $H$ evaluates precisely to the sum of the internal vertex counts. In Case 1 ($n_x = 0$), the order is $|V(H)| = n_\Delta$, while in Case 2 ($n_x = 1$), the order is $|V(H)| = n_\Delta + 1$. This matches the order of the underlying tree prescribed in Definition \ref{def:extremal_trees} perfectly.

For any vertex $v \in V(H)$, the number of pendant vertices incident to it in $T$ is identically $d_T(v) - d_H(v)$. Theorem \ref{thm:alternative_structure} dictates the exact internal degree distribution $d_T(v)$ for the fully constructed tree $T$:
\begin{itemize}
    \item In Case 1, $d_T(v) = \Delta$ for all $v \in V(H)$. Thus, exactly $\Delta - d_H(v)$ pendant vertices must be attached to each $v$.
    \item In Case 2, exactly one vertex $u^* \in V(H)$ has $d_T(u^*) = x$, while all other vertices $v \in V(H) \setminus \{u^*\}$ have $d_T(v) = \Delta$. The condition $m_{x,\Delta} = x$ enforces $m_{1,x} = 0$, meaning $u^*$ receives exactly $0$ pendant vertices, yielding $d_H(u^*) = x$. Every remaining vertex $v$ requires $\Delta - d_H(v)$ pendant vertices.
\end{itemize}
Since $T$ is reconstructed from $H$ via these exact and necessary leaf attachments, any tree satisfying Theorem \ref{thm:alternative_structure} is inherently generated by Definition \ref{def:extremal_trees}, confirming that $\mathcal{T}_{\mathrm{opt}}(n, \Delta)$ is the unique extremal family.

To visually clarify this topological synthesis, Figure \ref{fig:extremal_example} illustrates an extremal tree from $\mathcal{T}_{\mathrm{opt}}(24, 6)$ for $x = 3$, distinctly highlighting the underlying internal subtree $H$ and the resulting edge classes.

\begin{figure}[htbp]
    \centering
    \begin{tikzpicture}[
        internal/.style={circle, fill=black, inner sep=2pt},
        leaf/.style={circle, draw=black, fill=white, inner sep=1.5pt},
        Hedge/.style={ultra thick, black},
        pendantedge/.style={thin, gray},
        scale=1.2
    ]
        \node[internal, label=0:{$u^* (d=3)$}] (u) at (0,0) {};
        \node[internal, label=20:{$v_1 (d=6)$}] (v1) at (-2,1) {};
        \node[internal, label=0:{$v_2 (d=6)$}] (v2) at (2,1) {};
        \node[internal, label={[label distance=4mm, yshift=1.5cm]-75:{$v_3 (d=6)$}}] (v3) at (0,-2) {};
        \node[internal, label=0:{$v_4 (d=6)$}] (v4) at (2,-2) {};

        \draw[Hedge] (u) -- (v1) node[midway, sloped, above] {\small $(3,6)$};
        \draw[Hedge] (u) -- (v2) node[midway, sloped, above] {\small $(3,6)$};
        \draw[Hedge] (u) -- (v3) node[midway, left] {\small $(3,6)$};
        \draw[Hedge] (v3) -- (v4) node[midway, below] {\small $(6,6)$};

        \foreach \angle in {90, 135, 180, 225, 270} {
            \node[leaf] (l1_\angle) at ($(v1) + (\angle:1.2)$) {};
            \draw[pendantedge] (v1) -- (l1_\angle);
        }
        
        \foreach \angle in {60, 105, 150, 240, 285} {
            \node[leaf] (l2_\angle) at ($(v2) + (\angle:1.2)$) {};
            \draw[pendantedge] (v2) -- (l2_\angle);
        }

        \foreach \angle in {120, 165, 210, 255} {
            \node[leaf] (l3_\angle) at ($(v3) + (\angle:1.2)$) {};
            \draw[pendantedge] (v3) -- (l3_\angle);
        }

        \foreach \angle in {45, 90, 135, 270, 315} {
            \node[leaf] (l4_\angle) at ($(v4) + (\angle:1.2)$) {};
            \draw[pendantedge] (v4) -- (l4_\angle);
        }
        
    \end{tikzpicture}
    \caption{An extremal tree $T \in \mathcal{T}_{\mathrm{opt}}(24, 6)$ for $x = 3$. The bold edges and solid black nodes induce the internal connected subtree $H$ of order $p=5$. As governed by the structural theorem, the selected vertex $u^*$ retains degree $3$ and receives no pendant vertices, yielding exactly three $(3,6)$ non-basic edges. The remaining four internal vertices $v_i$ reach the maximum degree $\Delta = 6$ by attaching $\Delta - d_H(v_i)$ leaves (hollow circles connected via thin gray lines), producing a single $(6,6)$ basis edge and $19$ pendant edges.}
    \label{fig:extremal_example}
\end{figure}

\section{Concluding Remarks and Extension to Graphs of Prescribed Order and Size ($k$-Cyclic Graphs)}

In this paper, we developed a general algebraic procedure that reduces the maximization of degree-homogeneous VDB topological indices over $\Delta$-trees to five analytical conditions \textbf{(P1)}--\textbf{(P5)} on the edge weight function $f(x,y)$. This method provides a complete characterization of the extremal trees across all $\Delta - 1$ congruence classes of $n \pmod{\Delta - 1}$ for the $DRSO$, $SDD$, and $HSO$ indices for all $\Delta \ge 3$, and establishes the exact maximum degree thresholds $\Delta \le 8$ for the $AG$ index and $\Delta \le 4$ for the $DSO$ index. More generally, conditions \textbf{(P1)}--\textbf{(P5)} serve as a direct analytical test applicable to any diagonal-constant VDB invariant.

As noted in the Introduction, the algebraic decomposition in Section~2 extends directly from trees ($k=0$) to the class of connected graphs of prescribed order $n$, size $m = n - 1 + k$ ($k \ge 0$), and maximum degree $\Delta$—that is, connected $k$-cyclic graphs, encompassing unicyclic ($k=1$) and bicyclic ($k=2$) graphs as special cases. For any connected graph $G$ of order $n$ and size $m = n - 1 + k$, the vertex-sum identity \eqref{eq:vertex_sum} remains unchanged, whereas the edge-sum identity \eqref{eq:edge_sum} becomes
\begin{equation*}
    \sum_{1 \le i \le j \le \Delta} m_{i,j} = m = n - 1 + k.
\end{equation*}
Repeating the substitution from Section~2 yields the generalized basic edge multiplicities:
\begin{align*}
    m_{1,\Delta} &= \frac{(\Delta - 2)n - 2k + 2}{\Delta - 1} - \frac{\Delta}{\Delta - 1} \sum_{(i,j) \notin \mathcal{B}} \left( \frac{1}{i} + \frac{1}{j} - \frac{2}{\Delta} \right) m_{i,j}, \\
    m_{\Delta,\Delta} &= \frac{n + (\Delta + 1)(k - 1)}{\Delta - 1} + \sum_{(i,j) \notin \mathcal{B}} \left[ \frac{\Delta}{\Delta - 1} \left( \frac{1}{i} + \frac{1}{j} - \frac{2}{\Delta} \right) - 1 \right] m_{i,j}.
\end{align*}
Because $m$ (or equivalently $k$) appears only in the constant terms, the coefficients multiplying the non-basic edge multiplicities $m_{i,j}$ are independent of the edge count. Consequently, any VDB topological index of a connected $k$-cyclic graph $G$ admits the decomposition
\begin{equation*}
    TI(G) = \mathcal{C}_k(n, \Delta) - \sum_{(i,j) \notin \mathcal{B}} G(i,j)\, m_{i,j},
\end{equation*}
where $\mathcal{C}_k(n, \Delta) = f(1,\Delta)\frac{(\Delta - 2)n - 2k + 2}{\Delta - 1} + f(\Delta,\Delta)\frac{n + (\Delta + 1)(k - 1)}{\Delta - 1}$, and the discrete penalty function $G(x,y)$ is \emph{identical} to that in \eqref{eq:algebraic_G}. 

As a result, Lemma~\ref{lem:universal_bounds} and the validity domains established in Corollaries~\ref{cor:drso_all_delta}--\ref{cor:dso_small_delta} hold without modification for connected graphs with any fixed cyclomatic number $k \ge 0$, while the degree-sum congruence \eqref{eq:alt_modulo} takes the form
\begin{equation*}
    \sum_{x=2}^{\Delta-1} (x-1)n_x \equiv n + 2k - 2 \pmod{\Delta - 1}.
\end{equation*}
Whenever $n$ is sufficiently large relative to $k$ and $\Delta$ to guarantee the graphical realization of a simple connected $k$-cyclic internal subgraph $H$ of order $p = \frac{n + 2k + \Delta - 2 - x}{\Delta - 1}$, the extremal $k$-cyclic graphs maximizing the $DRSO$, $SDD$, $HSO$ ($\Delta \ge 3$), $AG$ ($3 \le \Delta \le 8$), and $DSO$ ($3 \le \Delta \le 4$) indices have the exact same edge-multiplicity structure as in Theorem~\ref{thm:alternative_structure}. Determining the minimal order thresholds $n_0(\Delta, k)$ for simple $k$-cyclic realizations and characterizing the transition to alternative bases when \textbf{(P4)} or \textbf{(P5)} fail (namely, $\Delta \ge 5$ for $DSO$ and $\Delta \ge 9$ for $AG$) remain natural directions for future work.

\end{document}